\numberwithin{equation}{section}
\newtheorem{theorem}{Theorem}[section]     
\newtheorem{defi}[theorem]{Definition}
\newtheorem{thm}[theorem]{Theorem}
\newtheorem{prop}[theorem]{Proposition}
\newtheorem{rmk}[theorem]{Remark}
\newtheorem{es}[theorem]{Example}
\def\d{\partial}
\def\f{\frac}
\newcommand{\eqa}{\begin{eqnarray}}
\newcommand{\eeqa}{\end{eqnarray}}
\newcommand{\beq}{\begin{equation}}
\newcommand{\eeq}{\end{equation}}
\newcommand{\F}{{\cal F}}
\numberwithin{equation}{section}
\begin{document}
\title{Regular non-semisimple Dubrovin-Frobenius manifolds}

\author{Paolo Lorenzoni}
\address{P.~Lorenzoni:\newline Dipartimento di Matematica e Applicazioni, Universit\`a di Milano-Bicocca, \newline
Via Roberto Cozzi 53, I-20125 Milano, Italy and INFN sezione di Milano-Bicocca}
\email{paolo.lorenzoni@unimib.it}

\author{Sara Perletti}
\address{S.~Perletti:\newline Dipartimento di Matematica e Applicazioni, Universit\`a di Milano-Bicocca, \newline
Via Roberto Cozzi 53, I-20125 Milano, Italy and INFN sezione di Milano-Bicocca}
\email{s.perletti1@campus.unimib.it}

\maketitle

\begin{abstract}
We study regular non semisimple Dubrovin-Frobenius manifolds in dimension $2,3,4$. We focus on the case where the Jordan canonical
 form of the operator of multiplication by the Euler vector field has a single Jordan block. Our results rely on the existence of special local coordinates
 introduced in \cite{DH} for regular flat F-manifolds with Euler vector field. In such coordinates the invariant metric of the Dubrovin-Frobenius manifold 
 takes a special form which is the starting point of our construction.
\end{abstract}

\tableofcontents

\section*{Introduction}
\begin{sloppypar}
 Dubrovin-Frobenius manifolds has been introduced by Boris Dubrovin as a coordinate-free reformulation of the so called Witten-Dijkgraaf-Verlinde-Verlinde (WDVV)-equations of two-dimensional topological field theories (see \cite{D96}) and play an important role  in many areas of mathematics (quantum cohomology, Gromov-Witten theory, singularity theory, integrable PDEs etc). Some constructions in the theory of Dubrovin-Frobenius manifolds rely on an additional assumption: the existence of a holonomic frame of idempotents. Dubrovin-Frobenius manifolds having this property are called semisimple or massive since in a physical context they correspond to massive perturbations of two-dimensional topological field theories. Semisimple Dubrovin-Frobenius manifolds are characterized by the existence of a special set of local coordinates, called Dubrovin canonical coordinates or simply canonical coordinates, reducing the structure constants of the product to a constant canonical form. A generalization of canonical coordinates  in the non semisimple regular case was found  by David and Hertling in \cite{DH}. David-Hertling canonical coordinates depend on the Jordan normal form of the operator of multiplication by the Euler vector field. In this paper using these coodinates we construct explicit examples of non semisimple regular Dubrovin-Frobenius manifolds in the case of a single Jordan block. 
\end{sloppypar}

The paper is organized as follows: In section $1$ we recall the definition of Dubrovin-Frobenius manifold and some known results in the semisimple case.
 In Section 2 we introduce David-Hertling canonical coordinates for regular non semisimple Dubrovin-Frobenius manifolds and some general properties
 of the invartiant metric in such coordinates. In Section 3 we focus on the case of a single Jordan block in dimension $2,3$ and $4$.

\section{Dubrovin-Frobenius manifolds: the semisimple case}  
Following Dubrovin \cite{D96} we introduce the notion of Dubrovin-Frobenius manifold.
\begin{defi}
A Dubrovin-Frobenius manifold $M$ is a manifold equipped with a metric $\eta$, a commutative associative product $\circ$ on the tangent space with unit $e$ and a second distinguished vector field $E$ called the Euler vector field satisfying the following conditions
\begin{itemize}
\item Invariance of the metric:
\begin{equation}\label{EqI}
\eta_{il}\,c^l_{jk}=\eta_{jl}\,c^l_{ik}
\end{equation}
\item Flatness of the metric:
\begin{equation}\label{EqXI}
R^m_{ijk}=\partial_j\Gamma^m_{ik}-\partial_i\Gamma^m_{jk}+\Gamma^s_{ik}\Gamma^m_{sj}-\Gamma^s_{jk}\Gamma^m_{is}=0
\end{equation}
\item Symmetry of $\nabla c$:
\begin{equation}\label{EqII}
\nabla_ic^l_{jk}=\nabla_jc^l_{ik}
\end{equation}
\item Constancy of $e$:
\begin{equation}\label{EqIII}
\nabla_ie^k=0
\end{equation}
\item Homogeneity conditions:
\begin{equation}\label{homog}
\mathcal{L}_E c^i_{jk}=c^i_{jk},\qquad \mathcal{L}_E e^i=-e^i,\qquad \mathcal{L}_E\eta_{ij}=(2-d)\eta_{ij}
\end{equation}
\end{itemize}
for some constant $d$. Here $\nabla$ denotes the Levi-Civita connection associated to $\eta$ and $\mathcal{L}_Z$ denotes the Lie derivative along a vector field $Z$. 
\end{defi}

From the axioms above it follows that in flat coordinates for the metric the structure constants of the product can be written in terms of the third order partial derivatives of a function $F$ called the prepotential of the Dubrovin-Frobenius manifold:
\[c^i_{jk}=\eta^{il}\d_l\d_j\d_k F.\]
By construction the function $F$ is a solution of Witten-Dijkgraaf-Verlinde-Verlinde (WDVV) equations \cite{W,DVV}.

\begin{rmk}
The manifold $M$ in the above definition  is  a real or complex $n$-dimensional manifold. In the first case all the geometric data are supposed to be smooth.
 In the latter case $TM$ is intended as the holomorphic tangent bundle and all the geometric data are supposed to be holomorphic. 
\end{rmk}

\begin{rmk}
Since the components of the metric and of the unit vector field are constant in flat coordinates we clearly have
\begin{equation}\label{EqVIII}
\mathcal{L}_e\eta_{ij}=0.
\end{equation}
\end{rmk}
A point $p\in M$ of an $n$-dimensional Dubrovin-Frobenius manifold is called \textit{semisimple} if $T_pM$ has a basis of idempotents $\pi_1,\dots,\pi_n$ satisfying $\pi_k \circ \pi_l = \delta_{k,l} \pi_k$. Semisimplicity at a point is an open property on $M$: locally around a semisimple point one can choose coordinates $u^i$ such that $\frac{\d}{\d u^k}\circ\frac{\d}{\d u^l}=\delta_{k,l}\frac{\d}{\d u^k}$. These coordinates are called {\it canonical coordinates}.

Due to \eqref{EqI}, in canonical coordinates the metric $\eta$ becomes diagonal: $\eta_{ij}=H_i^2\delta_{ij}$. Let us introduce the \emph{Ricci rotation coefficients} $\beta_{ij}:=\f{\d_j H_i}{H_j}$, $i\ne j$. In the case of Dubrovin-Frobenius manifolds the rotation coefficients are symmetric ($\beta_{ij}=\beta_{ji}$) and as a consequence the metric is potential in canonical coordinates (i.e. $H_i^2=\d_i\varphi$ for some function $\varphi$). Moreover it is easy to check that the rotation coefficients satisfy the following overdetermined system of PDEs:
\begin{align}
\label{ED1}
\d_k\beta_{ij}=&\beta_{ik}\beta_{kj}, && i\ne j\ne k\ne i,\\
\label{ED2}
e(\beta_{ij})=&0, && i\ne j,\\
\label{ED3}
E(\beta_{ij})=&-\beta_{ij}, && i\ne j,
\end{align}  
where 
\[
e=\sum_{i=1}^n\d_i,\qquad E=\sum_{i=1}^nu^i\d_i.
\]
Condition \eqref{ED2} follows from \eqref{EqVIII}. The system (\ref{ED1},\ref{ED2}) is called \emph{Darboux-Egorov system} (see \cite{Darboux, Egorov}) and  implies the flatness of the metric $\eta$. The last condition \eqref{ED3}
 follows from the homogeneity properties. 
Given a solution of the above system, the Lam\'e coefficients $(H_1,...,H_n)$ are obtained by solving the overdetermined system of PDEs
\begin{align}
\label{ED5}
\d_jH_i=&\beta_{ij}H_j,&& i\ne j,\\
\label{ED6}
e(H_i)=&0, && \\
\label{ED7}
E(H_i)=&DH_i, && 
\end{align}
where $D=-\f{d}{2}$ is an eigenvalue of the skew-symmetric matrix $V_{ij}:=(u^j-u^i)\beta_{ij}$ \cite{D96}.
In dimension $n=3$, on the open set $u^1\neq u^2\neq u^3\neq u^1$, 
the general solution of the system (\ref{ED2}, \ref{ED3}) is
 \begin{equation}\label{relationsbetaF}
\begin{split}
\beta_{12}=\frac{1}{u^2-u^1}F_{12}\left(\frac{u^3-u^1}{u^2-u^1} \right)\,\,\,\\
\beta_{23}=\frac{1}{u^3-u^2}F_{23}\left(\frac{u^3-u^1}{u^2-u^1} \right)\,\,\,\\
\beta_{13}=\frac{1}{u^3-u^1}F_{13}\left(\frac{u^3-u^1}{u^2-u^1} \right).
\end{split}
\end{equation}
The remaining conditions \eqref{ED1} are equivalent to the following non-autonomous system of ODEs:
\begin{equation}\label{reductionODE3}
\begin{split}
  \frac{dF_{12}}{dz}&=\frac{1}{z(z-1)}F_{13}F_{23}\\
 \frac{dF_{13}}{dz}&=-\frac{1}{z-1}F_{12}F_{23}\\
 \frac{dF_{23}}{dz}&=\frac{1}{z}F_{12}F_{13}
\end{split}
\end{equation}
where $z:=\frac{u^3-u^1}{u^2-u^1}$.  It is well-known that three-dimensional Dubrovin-Frobenius manifolds are parameterized by solutions of a family of Painlev\'e VI equation (see \cite{D96}). This can be easily proved also studying system \eqref{reductionODE3}.

\begin{theorem}\label{painleveVI}
System \eqref{reductionODE3} is equivalent to the following  sigma form of Painlev\'e VI equation:
\begin{equation}\label{sigmapainleve0}
z^2(z-1)^2(\sigma'')^2+4\left[\sigma'\left(z\sigma'-\sigma\right)^2-(\sigma')^2(z\sigma'-\sigma) \right]=
-2R^2(\sigma')^2+R^4\sigma', 
\end{equation}
where the parameter $R^2$ is the value of the first integral $I=F_{12}^2+F_{13}^2+F_{23}^2$. 
\end{theorem}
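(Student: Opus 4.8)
The plan is to use the first integral to collapse the three first-order equations into a single second-order ODE. First I would confirm that $I=F_{12}^2+F_{13}^2+F_{23}^2$ really is conserved: differentiating $I$ and substituting \eqref{reductionODE3} produces the common factor $2F_{12}F_{13}F_{23}$ multiplied by $\frac{1}{z(z-1)}-\frac{1}{z-1}+\frac{1}{z}$, which is identically zero. Hence $I\equiv R^2$ is constant on solutions. Next I would pass to the squared variables $a:=F_{12}^2$, $b:=F_{13}^2$, $c:=F_{23}^2$ and set $P:=F_{12}F_{13}F_{23}$. Multiplying each equation of \eqref{reductionODE3} by twice the corresponding $F$ gives $a'=\frac{2P}{z(z-1)}$, $b'=-\frac{2P}{z-1}$, $c'=\frac{2P}{z}$, from which I read off the two linear relations $b'=-z\,a'$ and $c'=(z-1)a'$ together with the algebraic identity $z^2(z-1)^2(a')^2=4P^2=4abc$. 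In these variables the first integral becomes the constraint $a+b+c=R^2$.

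The key step is the change of unknown
\[
\sigma:=z\,F_{12}^2+F_{13}^2-\tfrac{1}{2}R^2=za+b-\tfrac{1}{2}R^2 .
\]
Using $b'=-z\,a'$ one finds at once $\sigma'=a$, and hence $z\sigma'-\sigma=\tfrac{1}{2}R^2-b$ and $(z-1)\sigma'-\sigma=c-\tfrac{1}{2}R^2$; the particular constant $-\tfrac{1}{2}R^2$ is precisely what symmetrises these three expressions and is forced if the final identity is to close. Substituting $\sigma'=a$, $\sigma''=a'$ and these two bracket values into \eqref{sigmapainleve0}, the prefactored term $z^2(z-1)^2(\sigma'')^2$ becomes $4abc$ by the identity above, while the remaining bracket is an explicit cubic in $a,b,c$. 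Eliminating $c$ through $c=R^2-a-b$, the equality of the two sides of \eqref{sigmapainleve0} reduces to the identity $z^2(z-1)^2(a')^2=4abc$ already established, so every solution of \eqref{reductionODE3} yields a solution of \eqref{sigmapainleve0} with $R^2=I$.

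For the converse I would simply reverse these steps. Given $\sigma$ solving \eqref{sigmapainleve0}, set $a=\sigma'$, $b=\tfrac{1}{2}R^2-(z\sigma'-\sigma)$ and $c=\tfrac{1}{2}R^2+\bigl((z-1)\sigma'-\sigma\bigr)$. By construction $a+b+c=R^2$, and differentiating gives $b'=-z\sigma''=-z\,a'$ and $c'=(z-1)\sigma''=(z-1)a'$ automatically; feeding these back into \eqref{sigmapainleve0} makes it collapse exactly to $z^2(z-1)^2(a')^2=4abc$. Defining $F_{12},F_{13},F_{23}$ as square roots of $a,b,c$ with the branch fixed so that $F_{12}F_{13}F_{23}=\tfrac{1}{2}z(z-1)a'$, and dividing by the relevant factor, recovers \eqref{reductionODE3}, so the two descriptions carry the same two-parameter family of solutions for each fixed $R^2$.

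I expect the only genuine obstacles to be two. The first is discovering the substitution $\sigma$: the guiding principle is that $\sigma'=F_{12}^2$ is the \emph{unique} natural choice for which the prefactor $z^2(z-1)^2$ cancels against $(\sigma'')^2=4abc/\bigl(z^2(z-1)^2\bigr)$, and once this is seen the additive constant is determined by demanding the algebra close. The second, in the converse direction, is the sign and branch bookkeeping for the square roots $\sqrt{a},\sqrt{b},\sqrt{c}$, which is controlled by the sign of $P=\tfrac{1}{2}z(z-1)a'$; away from the zeros of $F_{12}F_{13}F_{23}$ this is routine, and the remainder of the argument is a direct polynomial verification.
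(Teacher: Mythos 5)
Your argument is correct and follows essentially the same route as the paper: conservation of $I=F_{12}^2+F_{13}^2+F_{23}^2$, the substitution $F_{12}^2=\sigma'$, $F_{13}^2=\sigma-z\sigma'+\tfrac{R^2}{2}$, $F_{23}^2=-\sigma+(z-1)\sigma'+\tfrac{R^2}{2}$, and squaring the identity $z(z-1)\sigma''=2F_{12}F_{13}F_{23}$. The only additions are that you derive the substitution from the linear relations $b'=-za'$, $c'=(z-1)a'$ rather than positing it, and you spell out the converse direction with its branch bookkeeping, which the paper leaves implicit.
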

\begin{proof}
First notice that $\frac{dI}{dz}=0$ as a simple computation shows. So we set $I=R^2$. Following \cite{AL13} it is easy to check that one can write
 the squares of the functions $F_{ij}$ in terms of a single function $\sigma(z)$:
\begin{eqnarray}
\label{position12}
F_{12}^2&=&\sigma',\\
\label{position13}
F_{13}^2&=&\sigma-z\sigma'+\f{R^2}{2},\\
\label{position23}
F_{23}^2&=&-\sigma+(z-1)\sigma'+\f{R^2}{2}.
\end{eqnarray}
From equations \eqref{position12}, \eqref{position13} and \eqref{position23} we have immediately 
\begin{equation}\label{step1}
z\frac{d}{dz}\left(F_{23}^2\right)=z(z-1)\frac{d}{dz}\left(F_{12}^2\right)=-(z-1)\frac{d}{dz}\left( F_{13}^2\right)=z(z-1)\sigma''(z).
\end{equation}
On the other hand, due to \eqref{reductionODE3} we have 
$$z\frac{d}{dz}\left(F_{23}^2\right)=z(z-1)\frac{d}{dz}\left(F_{12}^2\right)=-(z-1)\frac{d}{dz}\left( F_{13}^2\right)=2F_{12}F_{13}F_{32}.$$
By comparing these equations with \eqref{step1} and taking the square we obtain \eqref{sigmapainleve0}.
 \end{proof}
In dimension 4 there is a special class of Dubrovin-Frobenius manifolds that are also related to Painlev\'e VI equation \cite{Romano}. Dropping the assumption
 of symmetry of the rotation coefficients and allowing different degrees of homogeneity for the Lam\'e coefficients one ends up with the Darboux-Egorov system
 (\ref{ED1},\ref{ED2}) with the additional constraint 
\begin{equation}
\label{ED3bis}
E(\beta_{ij})=(d_i-d_j-1)\beta_{ij},\qquad i\ne j.
\end{equation}
In dimension $3$ the system $(\ref{ED1},\ref{ED2},\ref{ED3bis})$ reduces to a system of $6$ ODEs that turned out to be equivalent to the full family of Painlev\'e VI \cite{Limrn}. The corresponding geometric structure is a generalization of Dubrovin-Frobenius manifold structure and it is called bi-flat structure \cite{AL13}. 
 A similar result can be obtained studying the system (see \cite{ALmulti})
\begin{eqnarray}
\label{BF1}
&&\d_k\Gamma^i_{ij}=-\Gamma^i_{ij}\Gamma^i_{ik}+\Gamma^i_{ij}\Gamma^j_{jk}
+\Gamma^i_{ik}\Gamma^k_{kj}, \quad i\ne k\ne j\ne i,\\
\label{BF2}
&&e(\Gamma^i_{ij})=0,\qquad i\ne j\\
\label{BF3}
&&E(\Gamma^i_{ij})=-\Gamma^i_{ij},\qquad i\ne  j.
\end{eqnarray}
System \eqref{BF1} is called Darboux-Tsarev system. Regular non semisimple bi-flat structures in dimension 3 are also related to Painlev\'e transcendents. This was proved in \cite{ALmulti} studying the analogue of the Darboux-Tsarev system in the non semisimple case (see also \cite{KM} for an alternative approach based on the study of Okubo-type systems). 
 
	\section{Dubrovin-Frobenius metric in the general regular case}
	Let $M$ be a non semisimple Dubrovin-Frobenius manifold of dimension $n$, with commutative and associative product $\circ$, metric $\eta$, unit vector field $e$ and Euler vector field $E$. Let $M$ be \emph{regular} near a point $m\in M$, meaning that each Jordan block of the operator $L=E\,\circ$ is associated to a different eigenvalue.
	
	Let $r$ be the number of Jordan blocks of $L$ and let $m_{1},\dots,m_{r}$ be their sizes. Any set of coordinates $u^1,\dots,u^n$ for $M$ can be re-labelled by means of the following notation: for each $\alpha\in\{2,\dots,r\}$ and for each $j\in\{1,\dots,m_\alpha\}$ we write
	\begin{equation}
		\label{relabellingcoordinates}
		j(\alpha)=m_1+\dots+m_{\alpha-1}+j
	\end{equation}
	(for $\alpha=1$ we set $j(\alpha)=j$) so that $u^{j(\alpha)}$ denotes the $j$-th coordinate associated to the $\alpha$-th Jordan block. From now on, we will write $u^i$ when seeing the coordinate as running from $1$ to the dimension of the manifold and we will write $u^{i(\alpha)}$ when in need to highlight the Jordan block to which the coordinate refers. According to this notation, $\partial_i$ and $\partial_{i(\alpha)}$ will denote the partial derivative with respect to $u^i$ and $u^{i(\alpha)}$ respectively.
	
	In \cite{DH} David and Hertling provide a generalization of canonical coordinates in the regular case. According to their results we can assume that the product has the following form:
	\begin{equation}
		\label{CCanDH}
		\partial_{i(\alpha)}\circ\partial_{j(\beta)}=
		\begin{cases}
			\delta_{\alpha\beta}\,\partial_{(i+j-1)(\alpha)}\qquad&i+j\leq m_\alpha+1\\
			0&i+j\geq m_\alpha+2
		\end{cases}
	\end{equation}
	for all $i\in\{1,\dots,m_\alpha\}$, $j\in\{1,\dots,m_\beta\}$ for each $\alpha,\beta\in\{1,\dots,r\}$. The unit vector field takes the form
	\begin{equation}
		\label{unitDH}
		e=\overset{r}{\underset{\alpha=1}{\sum}}\,\partial_{1(\alpha)}
	\end{equation}
	and the Euler vector field becomes
	\begin{equation}
		\label{EulerDH}
		E=\overset{n}{\underset{s=1}{\sum}}\,u^s\,\partial_s\,.
	\end{equation}
	The operator $L=E\,\circ$ is given by
		\begin{equation}
			\label{Lallblocks}
			L=L^{i(\alpha)}_{j(\beta)}\,\partial_{i(\alpha)}\otimes du^{j(\beta)}
		\end{equation}
		where
		\begin{equation}
			\label{Lallblocks_cpt}
			L^{i(\alpha)}_{j(\beta)}=\begin{cases}
				\delta_{\alpha\beta}\,u^{(i-j+1)(\alpha)}\qquad&i\geq j\\
				0&i<j
			\end{cases}
		\end{equation}
		for $\alpha,\beta\in\{1,\dots,r\}$ and $i\in\{1,\dots,m_\alpha\}$, $j\in\{1,\dots,m_\beta\}$.
	
       In fact, given $\alpha,\beta\in\{1,\dots,r\}$ and $i\in\{1,\dots,m_\alpha\}$, $j\in\{1,\dots,m_\beta\}$ we have
		\begin{align}
			L^{i(\alpha)}_{j(\beta)}&=\big(E\,\circ\,\partial_{j(\beta)}\big)^{i(\alpha)}=u^{k(\gamma)}\,\big(\partial_{k(\gamma)}\,\circ\,\partial_{j(\beta)}\big)^{i(\alpha)}\notag\\&=
			\begin{cases}
				u^{k(\gamma)}\,\delta_{\beta\gamma}\,\big(\partial_{(j+k-1)(\beta)}\big)^{i(\alpha)}\qquad&1\leq k\leq m_\beta-j+1\\
				0&\textnormal{otherwise}
			\end{cases}\notag\\&=
			\begin{cases}
				u^{k(\beta)}\,\delta_{\alpha\beta}\,\delta^i_{j+k-1}\qquad\qquad\qquad&1\leq i-j+1\\
				0&\textnormal{otherwise}
			\end{cases}\notag\\&=
			\begin{cases}
				\delta_{\alpha\beta}\,u^{(i-j+1)(\alpha)}\qquad\qquad\qquad\,\,\,\,&i\geq j\\
				0&i<j.
			\end{cases}\notag
		\end{align}
	\noindent
	In order for the data $(\eta,\circ,e,E)$ to define an actual Dubrovin-Frobenius manifold, we have to impose all the axioms entering its definition.

	\indent
	In particular, we want to study conditions \eqref{EqI}--\eqref{EqVIII} in David-Hertling canonical coordinates. As stated in \cite{DH}, the metric $\eta$ is represented by a matrix whose entries vanish when corresponding to different Jordan blocks of $L$ and each block corresponds to an upper triangular Hankel submatrix. This follows from \eqref{EqI}. Precisely
	\begin{equation}
		\label{metricDHtensor}
		\eta=\delta_{\alpha\beta}\,\overline{\eta}_{(i+j-1)(\alpha)}\,du^{i(\alpha)}\otimes du^{j(\beta)}
	\end{equation}
	for some functions $\big\{\overline{\eta}_{(i)(\alpha)}\,|\,1\leq\alpha\leq r,\,1\leq i\leq m_\alpha\big\}$ and $\overline{\eta}_{(i)(\alpha)}=0$ for $i\geq m_\alpha+1$. Moreover, \eqref{EqIII} implies the existence of a \emph{metric potential} $H$ such that
	\begin{equation}
		\label{potentialDHmetric}
		\overline{\eta}_{i(\alpha)}=\partial_{i(\alpha)}H
	\end{equation}
	for all $i\in\{1,\dots,m_\alpha\}$ for each $\alpha\in\{1,\dots,r\}$.
	\bigskip\\
	\noindent
	Since we consider non semisimple Dubrovin-Frobenius manifolds, there must exist at least one Jordan block of size greater or equal than $2$. Without loss of generality we then assume that the size of the first Jordan block is greater than $1$. If one drops this assumption, analogous results will hold, where different coordinates will play the roles here played by $u^1$, $u^2$.
		
	\indent
	If we take into account that the metric must be homogeneous with respect to the Euler vector field and constant with respect to the unity vector field, we are able to get a further expression for the terms $\overline{\eta}_{(i)(\alpha)}$s.
	\begin{thm}
		The functions $\overline{\eta}_i$s appearing in \eqref{metricDHtensor} can be written as
		\begin{equation}
			\label{metricconj}
			\overline{\eta}_i=(u^2)^{-d}\,F_i\qquad\qquad i\in\{1,\dots,n\}
		\end{equation}
		for some functions $F_1,\dots,F_n$ of the variables
		\begin{equation}
			\label{zjconj}
			z^j=\frac{u^{j+2}-u^1\,\overset{r}{\underset{\alpha=2}{\sum}}\,\delta^{j+2}_{1(\alpha)}}{u^2}\qquad\qquad j\in\{1,\dots,n-2\}
		\end{equation}
		such that
		\begin{align}
			&F_1=-\overset{r}{\underset{\alpha=2}{\sum}}\,\partial_{z^{1(\alpha)-2}}f+C_1\label{F1conj}\\
			&F_2=-z^j\,\partial_{z^j}f-(d-1)\,f+C_2\label{F2conj}\\
			&F_j=\partial_{z^{j-2}}f\qquad\qquad\qquad\qquad\qquad\qquad j\in\{3,\dots,n\}\label{Fjconj}
		\end{align}
		for some function $f$ of $z^1,\dots,z^{n-2}$ and constants $C_1$, $C_2$. In particular, the quantity
		\begin{equation}
			\label{integraleprimo1conj}
			\overset{r}{\underset{\alpha=1}{\sum}}\,F_{1(\alpha)}=C_1
		\end{equation}
		is a constant that vanishes whenever $d\neq0$.
	\end{thm}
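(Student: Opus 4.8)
The plan is to use the two still-unused homogeneity axioms to constrain the Hankel entries $\overline{\eta}_k$, solve the resulting equations to obtain \eqref{metricconj}--\eqref{zjconj}, and then impose potentiality \eqref{potentialDHmetric} to extract \eqref{F1conj}--\eqref{Fjconj} together with the statement about $C_1$. First I would rewrite \eqref{EqVIII} and the third relation in \eqref{homog} in these coordinates. Because the unit \eqref{unitDH} has constant components and the Euler field \eqref{EulerDH} has components $E^s=u^s$, the Lie-derivative formulas reduce to $\mathcal{L}_e\eta_{ij}=e(\eta_{ij})$ and $\mathcal{L}_E\eta_{ij}=E(\eta_{ij})+2\eta_{ij}$, the $2$ coming from the two contractions with $\partial_i E^k=\delta^k_i$. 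Reading these against the Hankel form \eqref{metricDHtensor} turns the axioms into the scalar system $e(\overline{\eta}_k)=0$ and $E(\overline{\eta}_k)=-d\,\overline{\eta}_k$ for every $k$, with $e=\sum_\alpha\partial_{1(\alpha)}$ and $E=\sum_s u^s\partial_s$.

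To solve this I would note that the flow of $e$ shifts all block-initial coordinates $u^{1(\alpha)}$ by a common parameter while fixing the rest, so a complete set of $e$-invariants consists of $u^2$ together with the quantities $z^j$ of \eqref{zjconj}; indeed $e(u^2)=e(z^j)=0$, whereas $E(u^2)=u^2$ and $E(z^j)=0$, i.e.\ the $z^j$ are $E$-homogeneous of degree zero. Functional independence and completeness follow from the fact that $u^2 z^{s-2}$ reproduces $u^s$ at interior indices and $u^s-u^1$ at block-initial ones, so these $n-1$ functions exhaust the $e$-invariants. Consequently $\overline{\eta}_k$ is a function of $u^2$ and the $z^j$ only, and $E(\overline{\eta}_k)=-d\,\overline{\eta}_k$ becomes $u^2\partial_{u^2}\overline{\eta}_k=-d\,\overline{\eta}_k$, which integrates to \eqref{metricconj}.

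The core of the proof is then \eqref{potentialDHmetric}, equivalent to the closedness relations $\partial_k\overline{\eta}_l=\partial_l\overline{\eta}_k$. Substituting \eqref{metricconj} and differentiating by the chain rule, I would record $\partial_1 z^j=-\epsilon_j/u^2$, $\partial_2 z^j=-z^j/u^2$ and $\partial_m z^j=\delta^m_{j+2}/u^2$ for $m\geq3$, where $\epsilon_j:=\sum_{\alpha=2}^r\delta^{j+2}_{1(\alpha)}$ is the block-initial indicator, and then sort the closedness relations by index pattern. For $k,l\geq3$ they reduce to $\partial_{z^{k-2}}F_l=\partial_{z^{l-2}}F_k$, so $(F_3,\dots,F_n)$ is closed in $z$-space and integrates to a potential $f$, giving \eqref{Fjconj}; pairing the index $2$ with $l\geq3$ integrates to \eqref{F2conj}; and pairing the index $1$ with $l\geq3$ integrates to \eqref{F1conj}, the sum $\sum_{\alpha=2}^r\partial_{z^{1(\alpha)-2}}f$ being exactly the contraction of $\epsilon_j$ against $\partial_{z^j}f$ and $C_1$ the integration constant.

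I expect the remaining closedness relation, between the indices $1$ and $2$, to be the decisive one. Substituting the $F_1$ and $F_2$ already found, all second-derivative contributions of $f$ should cancel and leave the bare identity $d\,C_1=0$. Finally, for \eqref{integraleprimo1conj} I would add $F_1$ from \eqref{F1conj} to $\sum_{\alpha=2}^r F_{1(\alpha)}$; since each $1(\alpha)\geq3$ for $\alpha\geq2$, formula \eqref{Fjconj} gives $F_{1(\alpha)}=\partial_{z^{1(\alpha)-2}}f$, so the sum telescopes to $C_1$, which by $d\,C_1=0$ vanishes whenever $d\neq0$. The only real difficulty is bookkeeping: keeping the relabelling \eqref{relabellingcoordinates} and the indicator $\epsilon_j$ consistent so that block-initial and interior coordinates are handled separately throughout the chain-rule computation.
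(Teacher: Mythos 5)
Your proposal is correct and follows essentially the same route as the paper: $\mathcal{L}_e\eta=0$ and the Euler homogeneity give the ansatz $\overline{\eta}_i=(u^2)^{-d}F_i(z^1,\dots,z^{n-2})$, and the closedness of $\eta(e,\cdot)$ (equivalently \eqref{potentialDHmetric}), sorted by index pattern, yields the potential $f$, the formulas \eqref{F1conj}--\eqref{Fjconj}, and $d\,C_1=0$. The only (immaterial) differences are that you obtain \eqref{F1conj} directly from the $1$-versus-$l\geq3$ relations and then deduce \eqref{integraleprimo1conj}, and extract $d\,C_1=0$ from the single $1$-$2$ relation via cancellation, whereas the paper first sums the $1(\alpha)$ relations over $\alpha$ to get the constant $\sum_\alpha F_{1(\alpha)}=C_1$ and then concludes.
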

	\begin{proof}
		By imposing \eqref{EqVIII} we get
		\begin{equation}
			\notag
			\overset{r}{\underset{\alpha=1}{\sum}}\,\partial_{1(\alpha)}\overline{\eta}_i=\mathcal{L}_e\overline{\eta}_i=0
		\end{equation}
		for $i\in\{1,\dots,n\}$. It follows that each $\overline{\eta}_i$ can be written as
		\begin{equation}
			\overline{\eta}_i(u^1,\dots,u^n)=\varphi_i\bigg(u^2,u^3-u^1\,\overset{r}{\underset{\alpha=2}{\sum}}\,\delta^{3}_{1(\alpha)},\dots,u^n-u^1\,\overset{r}{\underset{\alpha=2}{\sum}}\,\delta^{n}_{1(\alpha)}\bigg)
		\end{equation}
		for some function $\varphi_i$ of $n-1$ variables. By the homogeneity condition \eqref{homog}, it can be rewritten as in \eqref{metricconj} for some function $F_i$ of the variables defined in \eqref{zjconj}.
		
		The flatness of $e$ with respect to $\nabla$ implies that $d\big(\eta(e,\cdot)\big)=0$ (see \cite{DH}), that is
		\begin{equation}
			\notag
			\partial_{j(\beta)}\overline{\eta}_{i(\alpha)}\,du^{j(\beta)}\wedge du^{i(\alpha)}=0
		\end{equation}
		thus
		\begin{equation}
			\label{starconj}
			\partial_{j(\beta)}\overline{\eta}_{i(\alpha)}-\partial_{i(\alpha)}\overline{\eta}_{j(\beta)}=0
		\end{equation}
		for all $i\in\{1,\dots,m_\alpha\}$, $j\in\{1,\dots,m_\beta\}$ and $\alpha,\beta\in\{1,\dots,r\}$. In particular, for $i(\alpha),j(\beta)\in\{3,\dots,n\}$ we get
		\begin{equation}
			\notag
			\partial_{z^{j(\beta)-2}}F_{i(\alpha)}=\partial_{z^{i(\alpha)-2}}F_{j(\beta)}.
		\end{equation}
		There must then exist a function $f$ of the variables $z^1,\dots,z^{n-2}$ realizing \eqref{Fjconj}. By fixing $j(\beta)=2$ and $i(\alpha)\in\{3,\dots,n\}$ in \eqref{starconj} we obtain the following relation:
		\begin{equation}
			\notag
			\partial_{i(\alpha)}\big((u^2)^{-d}\,F_2\big)=\partial_2\big((u^2)^{-d}\,F_{i(\alpha)}\big)
		\end{equation}
		which amounts to
		\begin{equation}
			\notag
			(u^2)^{-d-1}\,\partial_{z^{i(\alpha)-2}}F_2=-d\,(u^2)^{-d-1}\,F_{i(\alpha)}+(u^2)^{-d}\,\partial_2F_{i(\alpha)}
		\end{equation}
		and
		\begin{equation}
			\notag
			\partial_{z^{i(\alpha)-2}}F_2=-d\,F_{i(\alpha)}-\overset{n-2}{\underset{j=1}{\sum}}\,z^j\,\partial_{z^j}F_{i(\alpha)}.
		\end{equation}
		By taking into account \eqref{Fjconj} we get
		\begin{equation}
			\notag
			\partial_{z^{i(\alpha)-2}}F_2=-d\,\partial_{z^{i(\alpha)-2}}f-\overset{n-2}{\underset{j=1}{\sum}}\,z^j\,\partial_{z^j}\partial_{z^{i(\alpha)-2}}f.
		\end{equation}
		Then for each $i\in\{1,\dots,n-2\}$
		\begin{equation}
			\notag
			\partial_{z^{i}}F_2=-d\,\partial_{z^{i}}f-z^j\,\partial_{z^j}\partial_{z^{i}}f
		\end{equation}
		that is
		\begin{equation}
			\notag
			\partial_{z^{i}}\bigg[F_2-(1-d)\,f+z^j\,\partial_{z^j}f\bigg]=0.
		\end{equation}
		Therefore the quantity $F_2-(1-d)\,f+z^j\,\partial_{z^j}f$ equals some constant $C_2$, proving \eqref{F2conj}.
		
		By taking $i(\alpha)=1(\alpha)$, $j(\beta)\in\{3,\dots,n\}$ in \eqref{starconj} and summing over all $\alpha\in\{1,\dots,r\}$ we get
		\begin{equation}
			\notag
			\overset{r}{\underset{\alpha=1}{\sum}}\,\partial_{j(\beta)}\overline{\eta}_{1(\alpha)}=\overset{r}{\underset{\alpha=1}{\sum}}\,\partial_{1(\alpha)}\overline{\eta}_{j(\beta)}
		\end{equation}
		that is
		\begin{equation}
			\notag
			(u^2)^{-d}\,\partial_{j(\beta)}\Bigg(\,\overset{r}{\underset{\alpha=1}{\sum}}\,F_{1(\alpha)}\Bigg)=\mathcal{L}_e\overline{\eta}_{j(\beta)}
		\end{equation}
		thus
		\begin{equation}
			\notag
			\partial_{z^{j(\beta)-2}}\Bigg(\,\overset{r}{\underset{\alpha=1}{\sum}}\,F_{1(\alpha)}\Bigg)=0.
		\end{equation}
		This means that
		\begin{equation}
			\notag
			\partial_{z^{j}}\Bigg(\,\overset{r}{\underset{\alpha=1}{\sum}}\,F_{1(\alpha)}\Bigg)=0
		\end{equation}
		for all $j\in\{1,\dots,n-2\}$, proving that $\overset{r}{\underset{\alpha=1}{\sum}}\,F_{1(\alpha)}$ must be be equal to some constant $C_1$. Condition \eqref{F1conj} follows.
		
		On the other hand, by taking $i(\alpha)=1(\alpha)$, $j(\beta)=2$ in \eqref{starconj} and summing over all $\alpha\in\{1,\dots,r\}$ we get
		\begin{equation}
			\notag
			\partial_2\Bigg(\,\overset{r}{\underset{\alpha=1}{\sum}}\,(u^2)^{-d}\,F_{1(\alpha)}\Bigg)=0
		\end{equation}
		which, since $\overset{r}{\underset{\alpha=1}{\sum}}\,F_{1(\alpha)}=C_1$, amounts to
			\begin{equation}
				\notag
				\partial_2\big(\,(u^2)^{-d}\,C_1\big)=0.
			\end{equation}
			This implies $d\,C_1=0$, meaning that the constant $C_1$ must vanish whenever $d\neq0$.		
	\end{proof}
	\begin{prop}
		Up to constants, the function $f$ appearing in \eqref{F1conj}, \eqref{F2conj}, \eqref{Fjconj} is related to the metric potential $H$ by the following formula:
			\begin{equation}
				\label{fHrelazione}
				H=(u^2)^{1-d}\,f+C_2\,\varphi(u^2)+C_1\,u^1
			\end{equation}
			where
			\begin{equation}
				\varphi(u^2)=\begin{cases}
					\frac{(u^2)^{1-d}}{1-d}\qquad&\text{if }d\neq1\\
					\ln{u^2}\qquad&\text{if }d=1.
				\end{cases}
		\end{equation}
	\end{prop}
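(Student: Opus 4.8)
The plan is to \emph{verify the ansatz}: the metric potential $H$ is determined, up to an additive constant on the connected domain, by its gradient through the relations $\overline\eta_{i(\alpha)}=\partial_{i(\alpha)}H$ of \eqref{potentialDHmetric}, and by \eqref{metricconj} this gradient equals $(u^2)^{-d}F_{i(\alpha)}$. Hence it suffices to show that the right-hand side of \eqref{fHrelazione} satisfies $\partial_{i(\alpha)}H=(u^2)^{-d}F_{i(\alpha)}$ for every $i,\alpha$, with $F_1,F_2,F_j$ given by \eqref{F1conj}--\eqref{Fjconj}. This is exactly the content of the claim "up to constants".

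First I would record the chain-rule building blocks, namely the partial derivatives of the intermediate variables $z^l$ of \eqref{zjconj}: a direct computation gives $\partial_k z^l=\delta^l_{k-2}/u^2$ for $k\geq 3$, $\partial_2 z^l=-z^l/u^2$, and $\partial_1 z^l=-\big(\sum_{\alpha=2}^r\delta^{l+2}_{1(\alpha)}\big)/u^2$. Since $f$ depends on the $u$'s only through the $z^l$, these three formulas are the only inputs needed. I would then split the verification by which coordinate is differentiated. For $j\in\{3,\dots,n\}$ the terms $C_2\,\varphi(u^2)$ and $C_1 u^1$ are annihilated, and the chain rule yields at once $\partial_j H=(u^2)^{1-d}\,\partial_j f=(u^2)^{-d}\,\partial_{z^{j-2}}f=(u^2)^{-d}F_j$, reproducing \eqref{Fjconj}. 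Note that this single computation already covers the first coordinates $u^{1(\alpha)}$ of every block with $\alpha\geq 2$, since these indices are $\geq m_1+1\geq 3$.

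For the derivative along $u^2$, applying the product rule to $(u^2)^{1-d}f$ produces $(1-d)(u^2)^{-d}f$ together with $-(u^2)^{-d}\,z^l\,\partial_{z^l}f$, the latter coming from $\partial_2 z^l=-z^l/u^2$. The decisive point is that the piecewise definition of $\varphi$ is engineered precisely so that $\varphi'(u^2)=(u^2)^{-d}$ in both cases $d\neq 1$ and $d=1$; this is the integrating factor that restores the constant $C_2$. Collecting terms and using $(1-d)=-(d-1)$ gives $\partial_2 H=(u^2)^{-d}\big[-(d-1)f-z^l\partial_{z^l}f+C_2\big]=(u^2)^{-d}F_2$, which is \eqref{F2conj}.

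The remaining case $\partial_1 H$ is where the only genuine subtlety lies, and I expect it to be the main obstacle. Differentiating $(u^2)^{1-d}f$ through $\partial_1 z^l$ produces $-(u^2)^{-d}\sum_{\alpha=2}^r\partial_{z^{1(\alpha)-2}}f$, which correctly matches the $f$-dependent part of $F_1$ in \eqref{F1conj}. The difficulty is the constant: one has $\partial_1(C_1 u^1)=C_1$, whereas \eqref{metricconj}--\eqref{F1conj} demand the term $(u^2)^{-d}C_1$. These coincide if and only if $(u^2)^{-d}C_1=C_1$, and this is guaranteed not by any further computation but by invoking the constraint $d\,C_1=0$ proved in the preceding theorem: either $C_1=0$, whence both sides vanish, or $d=0$, whence $(u^2)^{-d}=1$. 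With this the $u^1$-derivative matches as well, all three gradients agree with $(u^2)^{-d}F_{i(\alpha)}$, and \eqref{fHrelazione} follows up to an additive constant, completing the argument.
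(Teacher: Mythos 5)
Your proposal is correct and is essentially the paper's proof run in reverse: the paper integrates the gradient conditions $\partial_i H=(u^2)^{-d}F_i$ coordinate by coordinate (first $i\geq 3$, then $i=2$, then $i=1$) to derive the formula, while you differentiate the candidate formula and check it against \eqref{F1conj}--\eqref{Fjconj}, using the same chain-rule identities for the $z^l$, the same role of $\varphi'(u^2)=(u^2)^{-d}$, and the same appeal to $d\,C_1=0$ at the $u^1$-step. Since $H$ is determined by its gradient up to an additive constant, the two directions are logically equivalent and your argument is complete.
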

	\begin{proof}
		By \eqref{potentialDHmetric} and \eqref{metricconj} we have
		\begin{equation}
			\label{Hfstart}
			\partial_iH=(u^2)^{-d}\,F_i\big(z^1,\dots,z^{n-2}\big)
		\end{equation}
		for each $i\in\{1,\dots,n\}$. For $i\geq3$ we get
		\begin{equation}
			\notag
			\partial_iH=(u^2)^{-d}\,\partial_{z^{i-2}}f
		\end{equation}
		that is
		\begin{equation}
			\notag
			\partial_{z^{i-2}}H=(u^2)^{1-d}\,\partial_{z^{i-2}}f
		\end{equation}
		or		
		\begin{equation}
			\notag
			\partial_{z^{i-2}}\big(H-(u^2)^{1-d}\,f\big)=0.
		\end{equation}
		It follows that
		\begin{equation}
			\label{almostH}
			H=(u^2)^{1-d}\,f+K(u^1,u^2)
		\end{equation}
		for some function $K(u^1,u^2)$. For $i=2$ in \eqref{Hfstart} we get
		\begin{equation}
			\notag
			\partial_2H=(u^2)^{-d}\,\big(-z^j\,\partial_{z^j}f-(d-1)\,f+C_2\big)
		\end{equation}
		that is
		\begin{equation}
			\notag
			(u^2)^{-d}\big((1-d)\,f-z^{j}\,\partial_{z^j}f\big)+\partial_2K=(u^2)^{-d}\,\big(-z^j\,\partial_{z^j}f-(d-1)\,f+C_2\big)
		\end{equation}
		yielding
		\begin{equation}
			\notag
			\partial_2K(u^1,u^2)=C_2\,(u^2)^{-d}.
		\end{equation}
		Then
			\begin{equation}
				\label{almostK}
				K(u^1,u^2)=\begin{cases}
					C_2\,\frac{(u^2)^{1-d}}{1-d}+k(u^1)\qquad&\text{if }d\neq1\\
					C_2\,\ln{u^2}+k(u^1)\qquad&\text{if }d=1.
				\end{cases}
			\end{equation}
			for some function $k(u^1)$. By putting together \eqref{almostH} and \eqref{almostK} one gets
			\begin{equation}
				\notag
				H=(u^2)^{1-d}\,f+C_2\,\varphi(u^2)+k(u^1)
			\end{equation}
			for
			\begin{equation}
				\label{varphiu2}
				\varphi(u^2)=\begin{cases}
					\frac{(u^2)^{1-d}}{1-d}\qquad&\text{if }d\neq1\\
					\ln{u^2}\qquad&\text{if }d=1.
				\end{cases}
			\end{equation}
			For $i=1$ in \eqref{Hfstart} we finally get
			\begin{equation}
				\notag
				\partial_1H=(u^2)^{-d}\,F_1
			\end{equation}
			that is
			\begin{equation}
				\notag
				-(u^2)^{-d}\,\overset{r}{\underset{\alpha=2}{\sum}}\,\partial_{z^{1(\alpha)-2}}f+\partial_1k(u^1)=-(u^2)^{-d}\,\overset{r}{\underset{\alpha=2}{\sum}}\,\partial_{z^{1(\alpha)-2}}f+(u^2)^{-d}\,C_1.
			\end{equation}
			Thus
			\begin{equation}
				\notag
				\partial_1k(u^1)=(u^2)^{-d}\,C_1=\begin{cases}
					0\qquad&\text{if }d\neq0\\
					C_1\qquad&\text{if }d=0
				\end{cases}=C_1
			\end{equation}
			implying
			\begin{equation}
				\notag
				k(u^1)=C_1\,u^1+C_3
			\end{equation}
			for some constant $C_3$. We conclude that
			\begin{equation}
				\notag
				H=(u^2)^{1-d}\,f+C_2\,\varphi(u^2)+C_1\,u^1+C_3
			\end{equation}
			for $\varphi(u^2)$ as in \eqref{varphiu2}.
	\end{proof}
	\bigskip
	\noindent

	\section{The case of a single Jordan block: explicit results up to dimension $4$}
	In this section we classify regular non semisimple Dubrovin-Frobenius manifold structures up to dimension $4$ in the case where the operator $L$ has a single Jordan block.
 Due to the results of the previous Section in the specific case where $L$ has a single Jordan block of size $n$ the unit vector field becomes $e=\partial_1$ and in canonical coordinates we have
	\begin{equation}
		\label{CCanDH_nD1J}
		\partial_{i}\circ\partial_{j}=
		\begin{cases}
			\partial_{i+j-1}\qquad&i+j\leq n+1\\
			0&i+j\geq n+2
		\end{cases}
	\end{equation}
	for all $i,j\in\{1,\dots,n\}$ and $u^i=u^{i(1)}$ for each $i\in\{1,\dots,n\}$. The operator $L$ is described by the following lower triangular Toeplitz matrix:
	\begin{equation}
		L=\begin{bmatrix}u^1&0&0&\dots&0&0\\u^2&u^1&0&\dots&0&0\\u^3&u^2&u^1&\dots&0&0\\\vdots&\vdots&\vdots&\ddots&\vdots&\vdots\\u^{n-1}&u^{n-2}&u^{n-3}&\dots&u^1&0\\u^n&u^{n-1}&u^{n-2}&\dots&u^2&u^1
		\end{bmatrix}.
	\end{equation}
	The metric is represented by an upper triangular Hankel matrix that only depends on the coordinate $u^2$ and on $n$ functions $F_1,\dots,F_n$ of the variables
	\begin{equation}
		\notag
		z^i=\frac{u^{i+2}}{u^2}\qquad i\in\{1,\dots,n-2\}.
	\end{equation}
	It takes the following form:
	\begin{align}
		\label{mymetric}
		\eta=(u^2)^{-d}\,\begin{bmatrix}
			F_1&F_2&F_3&\dots&F_{n-1}&F_n\\F_2&F_3&F_4&\dots&F_n&0\\F_3&F_4&F_5&\dots&0&0\\\vdots&\vdots&\vdots&\ddots&\vdots&\vdots\\F_{n-1}&F_n&0&\dots&0&0\\F_n&0&0&\dots&0&0
		\end{bmatrix}.
	\end{align}
	In particular, $F_1$ is equal to a constant $C_1$ that vanishes whenever $d\neq0$ and the other $F_i$s are expressed in terms of a function $f(z^1,\dots,z^{n-2})$ by
	\begin{align}
		&F_2=-z^i\,\partial_{z^i}f-(d-1)\,f+C_2\label{F2}\\
		&F_j=\partial_{z^{j-2}}f\label{Fj}\qquad\forall j\in\{3,\dots,n\}
	\end{align}
	for some constant $C_2$.\indent

	\subsection{Dimension $n=2$}
	Let $M$ be a two-dimensional Dubrovin-Frobenius manifold with product $\circ$, metric $\eta$, unit vector field $e$ and Euler vector field $E$. Let us require $M$ to be regular and the operator $L=E\,\circ$ to have a single Jordan block near a point $m\in M$. The unit and the Euler vector fields read respectively $e=\partial_1$ and $E=u^1\partial_1+u^2\partial_2$. It follows directly from \eqref{mymetric} that the metric has the form
	\begin{align}
		\label{metric2}
		\eta=(u^2)^{-d}\begin{bmatrix}
			C_1&C_2\\C_2&0
		\end{bmatrix}
	\end{align}
	for some constant $C_1$ which vanishes whenever $d\neq0$ and for some non-zero constant $C_2$.
	\bigskip
	\\We are able to recover flat coordinates and an explicit expression for the Dubrovin-Frobenius prepotential, as pointed out in the following result.
	\begin{thm}
		Flat coordinates coincide with the canonical ones when $d=0$. Otherwise, they are given by
		\begin{align}
			&x^1(u^1,u^2)=u^1\notag\\
			&x^2(u^1,u^2)=\frac{(u^2)^{1-d}}{1-d}\notag
		\end{align}
		when $d\neq1$ and by
		\begin{align}
			&x^1(u^1,u^2)=u^1\notag\\
			&x^2(u^1,u^2)=\ln{u^2}\notag
		\end{align}
		when $d=1$. In all the cases, the prepotential is given by
		\begin{align}
			\label{Fpot2}
			F(x^1,x^2)=\frac{C_1}{6}\,(x^1)^3+\frac{C_2}{2}\,(x^1)^2\,x^2
		\end{align}
		up to second-order polynomial terms. In flat coordinates the unit and the Euler vector fields are respectively given by $\overset{\sim}{e}=\overset{\sim}{\partial_1}$ and
		\begin{equation}
			\notag
			\overset{\sim}{E}=\begin{cases}
				x^1\overset{\sim}{\partial_1}+\overset{\sim}{\partial_2}\qquad\qquad\qquad&\text{if }d=1\\
				x^1\overset{\sim}{\partial_1}+x^2(1-d)\overset{\sim}{\partial_2}\qquad&\text{if }d\neq1.
			\end{cases}
		\end{equation}
	\end{thm}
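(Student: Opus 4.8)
The plan is to proceed in three steps: first exhibit the change of variables $u\mapsto x$ that turns $\eta$ into a constant matrix (these are by definition the flat coordinates); then transport the structure constants of the product into the $x$-frame, where they become constant, and integrate them three times to recover the prepotential $F$; and finally push the vector fields $e$ and $E$ forward through the change of variables. The whole argument is powered by the structural fact recorded just after \eqref{mymetric}: the entry $F_1$ equals a constant $C_1$ that \emph{vanishes as soon as $d\neq0$}. Consequently, in the two-dimensional metric \eqref{metric2} the $(1,1)$-entry is $(u^2)^{-d}C_1$, so for $d\neq0$ it is zero and $\eta$ is just $(u^2)^{-d}$ times the antidiagonal matrix with off-diagonal entries $C_2$; the only obstruction to constancy is then the scalar prefactor $(u^2)^{-d}$.

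To remove that prefactor I would keep $x^1=u^1$ and look for a second flat coordinate of the form $x^2=\varphi(u^2)$, depending on $u^2$ alone. This ansatz is natural because the metric and the product only involve $u^2$, and because $x^1=u^1$ forces $\widetilde\partial_1=\partial_1$ and hence $\widetilde e=\widetilde\partial_1$ at once. Writing $\eta$ as a $(0,2)$-tensor in the new frame, the Jacobian $\partial u/\partial x$ is diagonal with entries $1$ and $1/\varphi'$, so the $(1,2)$-entry of the transported metric is $(u^2)^{-d}C_2/\varphi'$; demanding that this be constant forces $\varphi'=(u^2)^{-d}$, while the $(2,2)$-entry stays zero and the $(1,1)$-entry becomes $(u^2)^{-d}C_1$, which is constant precisely because $C_1=0$ when $d\neq0$. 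Integrating $\varphi'=(u^2)^{-d}$ produces $\varphi(u^2)=\frac{(u^2)^{1-d}}{1-d}$ for $d\neq1$ and $\varphi(u^2)=\ln u^2$ for $d=1$---exactly the function $\varphi$ of the preceding Proposition---and the case $d=0$ needs no change of variables at all. This both verifies flatness and yields the coordinates in the statement.

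For the prepotential I would read off from \eqref{CCanDH_nD1J} with $n=2$ that the only nonzero structure constants in canonical coordinates are $c^1_{11}=1$ and $c^2_{12}=c^2_{21}=1$; lowering an index with $\eta$ through \eqref{EqI} gives the totally symmetric array $c_{111}=(u^2)^{-d}C_1$, $c_{112}=(u^2)^{-d}C_2$ and $c_{122}=c_{222}=0$. Transporting this $(0,3)$-tensor to the $x$-frame with the same diagonal Jacobian, each factor $(u^2)^{-d}$ is cancelled exactly by the $(u^2)^{d}$ coming from $\partial u^2/\partial x^2$, leaving the constants $\widetilde c_{111}=C_1$, $\widetilde c_{112}=C_2$ and $\widetilde c_{122}=\widetilde c_{222}=0$. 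Since the metric is now constant, $\widetilde c_{abc}=\partial_a\partial_b\partial_c F$; because the array is symmetric, $F=\tfrac16\widetilde c_{pqr}x^px^qx^r$ solves this compatibly and equals $\frac{C_1}{6}(x^1)^3+\frac{C_2}{2}(x^1)^2x^2$ up to terms of degree at most two, which is \eqref{Fpot2}.

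Finally, pushing the fields forward is immediate: $\widetilde e=\widetilde\partial_1$ as already observed, and for the Euler field $E(x^1)=E(u^1)=u^1=x^1$ while $E(x^2)=u^2\varphi'(u^2)=(u^2)^{1-d}$, which equals $(1-d)x^2$ when $d\neq1$ and equals $1$ when $d=1$; this reproduces the stated $\widetilde E$. I do not anticipate a real obstacle, since all computations are short and low-dimensional; the only care needed is bookkeeping across the three regimes $d=0$, $d=1$ and $d\neq0,1$. The genuinely load-bearing point---the one that elevates the argument above a routine change of variables---is the vanishing of $C_1$ for $d\neq0$: without it the $(1,1)$-entry of the metric could not be flattened by a transformation of the assumed diagonal form, and the simple cubic shape of the prepotential would break down.
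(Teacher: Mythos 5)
Your proposal is correct and follows essentially the same route as the paper: the same flat coordinates (obtained by absorbing the prefactor $(u^2)^{-d}$ into $x^2=\varphi(u^2)$ with $\varphi'=(u^2)^{-d}$), the same constant metric and unchanged structure constants in the $x$-frame, and the prepotential recovered by integrating $\widetilde c_{abc}=\partial_a\partial_b\partial_c F$. The only harmless imprecision is the claim that a Jacobian factor cancels the $(u^2)^{-d}$ in $\widetilde c_{111}$ --- no index $2$ appears there, so that entry is constant only because $C_1=0$ when $d\neq0$ (or because $d=0$), which is precisely the point you already flag as load-bearing.
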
	
	\begin{proof}
		If $d=0$ then the metric in \eqref{metric2} is constant, thus flat coordinates coincide with the canonical ones. Let us now fix $d\neq0$. In 
		this case the flat coordinates are
		\begin{align}
			&x^1(u^1,u^2)=u^1\notag\\
			&x^2(u^1,u^2)=\frac{(u^2)^{1-d}}{1-d}\notag
		\end{align}
		when $d\neq1$ and
		\begin{align}
			&x^1(u^1,u^2)=u^1\notag\\
			&x^2(u^1,u^2)=\ln{u^2}\notag
		\end{align}
		when $d=1$. In both cases, in flat coordinates the metric becomes\[
		\tilde{\eta}=\begin{bmatrix}
			C_1&C_2\\C_2&0
		\end{bmatrix}
		\]and the structure constants equal the ones in canonical coordinates:
		\[\tilde{c}^k_{ij}=c^k_{ij}\qquad i,j,k\in\{1,2\}.\]
		It follows that up to second-order polynomial terms the Dubrovin-Frobenius prepotential $F$ is of the form
		\begin{align}
			F(x^1,x^2)=&\,\frac{C_1}{6}(x^1)^3+\frac{C_2}{2}(x^1)^2\,x^2\notag
		\end{align}
		and that in flat coordinates the unit and the Euler vector fields become of the form stated above.
	\end{proof}
	
	\subsection{Dimension $n=3$}
	Let $M$ be a three-dimensional Dubrovin-Frobenius manifold with product $\circ$, metric $\eta$, unit vector field $e$ and Euler vector field $E$. Let us require $M$ to be regular and the operator $L=E\,\circ$ to have a single Jordan block near a point $m\in M$. The unit and the Euler vector fields read respectively $e=\partial_1$ and $E=u^1\partial_1+u^2\partial_2+u^3\partial_3$. We already know from \eqref{mymetric} that the metric is of the form
	\begin{align}
		\label{metric3}
		\eta=(u^2)^{-d}\,\begin{bmatrix}
			F_1(\frac{u^3}{u^2})&F_2(\frac{u^3}{u^2})&F_3(\frac{u^3}{u^2})\\F_2(\frac{u^3}{u^2})&F_3(\frac{u^3}{u^2})&0\\F_3(\frac{u^3}{u^2})&0&0
		\end{bmatrix}
	\end{align}
	for some functions $F_1$, $F_2$, $F_3$ and that $F_1$ is equal to a constant $C_1$ that vanishes whenever $d\neq0$. It turns out from the zero-curvature conditions that the functions $F_2$, $F_3$ must be solutions to the following  system of ODEs 
		\begin{equation}
			\label{3.1_ODEs}
			\begin{cases}
				F_2'+z\,F_3'+d\,F_3=0\\
				2\,F_3\,F_3''-3\,(F_3')^2=0.
			\end{cases}
		\end{equation}
 In fact, let us introduce the variable $z=\frac{u^3}{u^2}$. We have already seen that there exists a function $f(z)$ such that
		\begin{align}
			&F_2(z)=-z\,f'(z)-(d-1)\,f(z)+C_2\notag\\
			&F_3(z)=f'(z)\notag
		\end{align}
		for some constant $C_2$. It follows that
		\begin{equation}
			\notag
			F_2'+z\,F_3'+d\,F_3=0.
		\end{equation}
		Moreover, by requiring that $R^1_{232}=0$ one obtains the Liouville-type differential equation
		\begin{equation}
			\notag
			2\,F_3\,F_3''-3\,(F_3')^2=0.
		\end{equation}
		This suffices to make all of the conditions in \eqref{EqI}, \eqref{EqII}, \eqref{EqIII}, \eqref{homog}, \eqref{EqVIII}, \eqref{EqXI} hold without imposing more. So far, what we know about the functions $F_1$, $F_2$, $F_3$ is that $F_1$ equals some constant $C_1$ and that $F_2$, $F_3$ are solutions to the system \eqref{3.1_ODEs}. Two expressions for the function $f$ appering in \eqref{F2}, \eqref{Fj} are then possible, as shown below.
	\begin{thm}
		The function $f$ realizing \eqref{F2}, \eqref{Fj} is either provided by
		\begin{equation}
			\label{3.1fcaso1}
			f(z)=C_3\,z+C_4
		\end{equation}
		for some constants $C_3$, $C_4$ or by
		\begin{equation}
			\label{3.1fcaso2}
				f(z)=\frac{C_4}{z+C_3}+C_5
		\end{equation}
		for some constants $C_3$, $C_4$, $C_5$.
	\end{thm}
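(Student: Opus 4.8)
The plan is to reduce the whole statement to the second equation of \eqref{3.1_ODEs}, since the first one is an identity. Substituting $F_2=-z\,f'-(d-1)\,f+C_2$ and $F_3=f'$ (the expressions coming from \eqref{F2} and \eqref{Fj}) into $F_2'+z\,F_3'+d\,F_3$ makes it vanish identically, exactly as in the computation displayed before the theorem. Hence the only genuine constraint on $f$ is the Liouville-type equation $2\,F_3\,F_3''-3\,(F_3')^2=0$. Setting $g:=F_3=f'$, this becomes the autonomous second-order ODE
\[
2\,g\,g''-3\,(g')^2=0,
\]
so proving the theorem amounts to integrating this equation and then recovering $f$ by one further quadrature.

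First I would treat the degenerate branch. If $g'\equiv0$ then $g=f'$ is a constant, say $C_3$, which integrates to $f(z)=C_3\,z+C_4$; this is precisely \eqref{3.1fcaso1} (and it absorbs the subcase $F_3\equiv0$). Assume now that $g$ is non-constant, so that $g'\neq0$, and hence $g\neq0$, on some interval. There I would apply the classical reduction of order for autonomous equations: regarding $g'$ as a function $p(g)$ of $g$, one has $g''=p\,\dfrac{dp}{dg}$, and the equation turns into $2\,g\,p\,\dfrac{dp}{dg}=3\,p^2$. Dividing by $p\neq0$ gives the separable first-order equation $2\,g\,\dfrac{dp}{dg}=3\,p$, whose integration yields $p=A\,g^{3/2}$ for a constant $A$, i.e. $g'=A\,g^{3/2}$.

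A second separation and quadrature, $\int g^{-3/2}\,dg=\int A\,dz$, gives $g^{-1/2}=-\tfrac12(Az+B)$, so that
\[
g=f'=\frac{4}{(Az+B)^2},
\]
and a final integration produces $f(z)=-\dfrac{4}{A(Az+B)}+C$. Relabelling constants by $C_3=B/A$, $C_4=-4/A^2$ and $C_5=C$ puts this in the form $f(z)=\dfrac{C_4}{z+C_3}+C_5$ of \eqref{3.1fcaso2}.

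The integrations themselves are routine; the only mildly delicate point, and the one I would be careful about, is the case analysis around the vanishing of $g'$ (equivalently of $g$). One must check that the two families do not overlap in a way that leaves solutions unaccounted for: the branch $g'=A\,g^{3/2}$ with $A\neq0$ never degenerates to a constant, while $A=0$ returns the constant branch of the previous paragraph. Thus \eqref{3.1fcaso1} and \eqref{3.1fcaso2} together exhaust all local solutions, which is the content of the theorem.
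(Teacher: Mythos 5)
Your proposal is correct and follows essentially the same route as the paper: the first equation of \eqref{3.1_ODEs} is automatic from \eqref{F2}--\eqref{Fj}, so the only constraint is the Liouville-type equation $2f'f'''-3(f'')^2=0$, which is then split into the cases $f''=0$ (giving \eqref{3.1fcaso1}) and $f''\neq0$ (giving \eqref{3.1fcaso2}). The only difference is that you carry out the two quadratures explicitly via the reduction $g'=p(g)$, whereas the paper simply states the resulting solution families.
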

	\begin{proof}
		The first condition in \eqref{3.1_ODEs} amounts to \eqref{F2} and \eqref{Fj}, while the second one can be rewritten as
		\begin{equation}
			\label{solvef3}
			2\,f'(z)\,f'''(z)-3\,(f''(z))^2=0.
		\end{equation}
		Assuming $f''(z)\ne 0$ the solutions to equation \eqref{solvef3} can be  written as \eqref{3.1fcaso2}, while \eqref{3.1fcaso1} is recovered by considering solutions corresponding to $f''(z)=0$.
	\end{proof}
 Summarizing two cases may occur: either
	\begin{equation}
		\label{2_caso3}
		\begin{cases}
			&F_1(z)=C_1\\
			&F_2(z)=-C_3\,d\,z+C_2\\
			&F_3(z)=C_3
		\end{cases}
	\end{equation}
	for some constant $C_1$ that vanishes for $d\neq0$ and some constants $C_2$, $C_3$ or
	\begin{equation}
		\label{2_caso4}
		\begin{cases}
			&F_1(z)=C_1\\
			&F_2(z)=\frac{C_3\,C_4}{(z+C_3)^2}-\frac{(2-d)\,C_4}{z+C_3}+C_2\\
			&F_3(z)=\frac{C_4}{(z+C_3)^2}
		\end{cases}
	\end{equation}
	for some constant $C_1$ that vanishes for $d\neq0$ and some constants $C_2$, $C_3$, $C_4$.
	\begin{prop}
		In the case of \eqref{2_caso3} flat coordinates are given by
		\begin{align}
			&x^1(u^1,u^2,u^3)=u^1\notag\\
			&x^2(u^1,u^2,u^3)=(u^2)^{-d}\,u^3+\frac{C_2(u^2)^{1-d}}{C_3(1-d)}\notag\\
			&x^3(u^1,u^2,u^3)=\frac{2}{2-d}(u^2)^{\frac{2-d}{2}}\notag
		\end{align}
		when $d\notin\{0,1,2\}$, by
		\begin{align}
			&x^1(u^1,u^2,u^3)=u^1\notag\\
			&x^2(u^1,u^2,u^3)=\frac{u^3}{(u^2)^2}-\frac{C_2}{C_3\,u^2}\notag\\
			&x^3(u^1,u^2,u^3)=\ln{u^2}\notag
		\end{align}
		when $d=2$, by
		\begin{align}
			&x^1(u^1,u^2,u^3)=u^1\notag\\
			&x^2(u^1,u^2,u^3)=\frac{u^3}{u^2}+\frac{C_2}{C_3}\,\ln{u^2}\notag\\
			&x^3(u^1,u^2,u^3)=2\,\sqrt{u^2}\notag
		\end{align}
		when $d=1$ and, trivially, by
		\begin{align}
			&x^1(u^1,u^2,u^3)=u^1\notag\\
			&x^2(u^1,u^2,u^3)=u^2\notag\\
			&x^3(u^1,u^2,u^3)=u^3\notag
		\end{align}
		when $d=0$.
	\end{prop}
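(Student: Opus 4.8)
The plan is to use the characterization of flat coordinates as those in which the components of the metric are constant: once three functionally independent functions $x^1,x^2,x^3$ of $u^1,u^2,u^3$ are produced for which the transformed inverse metric $\tilde\eta^{ab}=\eta^{ij}\,\partial_ix^a\,\partial_jx^b$ is a constant nondegenerate matrix, the Christoffel symbols vanish in the chart $(x^a)$ and the coordinates are flat by definition. Working with the inverse metric is convenient, since the transformation $x=x(u)$ is given explicitly: only the Jacobian $\partial x^a/\partial u^i$ and the inverse of the Hankel matrix in \eqref{metric3} are needed, and the coordinate change itself need not be inverted.

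To discover the correct functions I would first exploit axiom \eqref{EqIII}: since $e=\partial_1$ is $\nabla$-parallel, the one-form $\eta(e,\cdot)=\eta_{1j}\,du^j$ is closed and covariantly constant, so its potential is automatically a flat coordinate. Reading off the first row of the metric in \eqref{2_caso3}, and using the constraint $C_1=0$ for $d\neq0$ established earlier, gives
\begin{align*}
\eta(e,\cdot)&=(u^2)^{-d}\Big[\big(-C_3\,d\,\tfrac{u^3}{u^2}+C_2\big)\,du^2+C_3\,du^3\Big]\\
&=C_3\,d\!\big((u^2)^{-d}u^3\big)+C_2\,(u^2)^{-d}\,du^2,
\end{align*}
which is exact with potential $C_3(u^2)^{-d}u^3+C_2\frac{(u^2)^{1-d}}{1-d}$; dividing by $C_3$ reproduces exactly the coordinate $x^2$ of the statement. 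The remaining flat coordinate I would seek among functions of $u^2$ alone: writing $x^3=\psi(u^2)$, the requirement that $\tilde\eta^{33}=\eta^{22}(\psi')^2$ be constant forces $\psi'\propto(u^2)^{-d/2}$, whence $x^3=\frac{2}{2-d}(u^2)^{(2-d)/2}$; and $x^1=u^1$ is singled out as the coordinate whose vector field is the parallel unit, $\partial/\partial x^1=e$.

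It then remains to verify constancy directly. Inverting the Hankel matrix, using $\det\!\big((u^2)^{d}\eta\big)=-C_3^3\neq0$, and contracting with the Jacobian gives
\[
\tilde\eta^{ab}=\frac{1}{C_3}\begin{bmatrix}0&1&0\\1&0&0\\0&0&1\end{bmatrix}.
\]
The only entry that is not manifestly constant is $\tilde\eta^{22}$, which reduces to $-C_1(u^2)^{-d}/C_3^2$; this is constant precisely because $C_1=0$ whenever $d\neq0$, so the vanishing of $C_1$ is not an extra hypothesis but an internal consistency check of the construction.

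The genuine difficulty is confined to the exceptional exponents, where the power-law antiderivatives degenerate into logarithms, and this is exactly why the statement splits into cases. The factor $(u^2)^{-d}$ integrates to $\frac{(u^2)^{1-d}}{1-d}$ except at $d=1$, where the $C_2$-term of $x^2$ becomes $\frac{C_2}{C_3}\ln u^2$; similarly $(u^2)^{-d/2}$ integrates to $\frac{2}{2-d}(u^2)^{(2-d)/2}$ except at $d=2$, where $x^3=\ln u^2$. I would therefore run the verification above for $d\notin\{0,1,2\}$ and then repeat it in the three special cases, the logarithmic antiderivative replacing the power; the value $d=0$ is immediate, since then $F_2=C_2$ is constant and the metric \eqref{metric3} is already constant, so the canonical coordinates are themselves flat.
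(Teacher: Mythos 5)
Your proposal is correct and is essentially the computation the paper itself invokes (its proof is literally ``a straightforward computation''): you verify that $\tilde\eta^{ab}=\eta^{ij}\partial_ix^a\partial_jx^b$ is the constant matrix $\tfrac{1}{C_3}\big(\begin{smallmatrix}0&1&0\\1&0&0\\0&0&1\end{smallmatrix}\big)$, correctly isolating $\tilde\eta^{22}=-C_1(u^2)^{-d}/C_3^2$ as the one entry requiring $C_1=0$ for $d\neq0$, and the case split at $d\in\{0,1,2\}$ is exactly the degeneration of the power-law antiderivatives. The extra derivation of $x^2$ as the potential of the $\nabla$-parallel one-form $\eta(e,\cdot)$ is a sound and pleasant way to motivate the formulas, but it does not change the substance of the argument.
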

	The proof is a straightforward computation.
	
	\begin{prop}
		Let $x^1,\,x^2,\,x^3$ denote flat coordinates. Up to second-order polynomial terms, in the case of \eqref{2_caso3} the prepotential is given by
		\begin{equation}
			F(x^1,x^2,x^3)=\frac{C_3}{2}\,(x^1)^2\,x^2+\frac{C_3}{2}\,x^1\,(x^3)^2
		\end{equation}
		when $d\neq0$ and by
		\begin{equation}
			F(x^1,x^2,x^3)=\frac{C_1}{6}\,(x^1)^3+\frac{C_2}{2}\,(x^1)^2\,x^2+\frac{C_3}{2}\,(x^1)^2\,x^3+\frac{C_3}{2}\,x^1\,(x^2)^2
		\end{equation}
		when $d=0$.
	\end{prop}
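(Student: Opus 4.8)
The plan is to reconstruct the prepotential from the defining relation $\partial_a\partial_b\partial_c F=c_{abc}$, which holds in flat coordinates $x^1,x^2,x^3$, where $c_{abc}=\eta(\partial_a,\partial_b\circ\partial_c)$ is the totally symmetric lowered structure tensor and $\partial_a$ denotes $\partial/\partial x^a$. Since $c_{ijk}$ is a covariant $3$-tensor, its flat components are obtained by pulling back along the change of variables of the previous Proposition; no inverse metric enters, only the factors $\partial u^i/\partial x^a$. So first I would record the canonical-coordinate components coming from \eqref{CCanDH_nD1J} and \eqref{metric3} in the case \eqref{2_caso3}: using $\partial_1\circ\partial_j=\partial_j$, $\partial_2\circ\partial_2=\partial_3$ and $\partial_2\circ\partial_3=\partial_3\circ\partial_3=0$, one finds $c_{111}=(u^2)^{-d}C_1$, $c_{112}=\eta_{12}$, $c_{113}=c_{122}=(u^2)^{-d}C_3$, while every component $c_{ijk}$ whose three indices all lie in $\{2,3\}$ vanishes (these pair $\partial_2,\partial_3$ against the zero Hankel entries $\eta_{23}=\eta_{33}=0$).

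The key structural observation is that in each of the sub-cases $d\notin\{0,1,2\}$, $d=1$, $d=2$ the flat coordinates satisfy $x^1=u^1$ with $u^2,u^3$ depending on $x^2,x^3$ only. Hence the Jacobian $\partial u/\partial x$ is block-diagonal, with $\partial u^1/\partial x^a=\delta^1_a$ and $\partial u^2/\partial x^1=\partial u^3/\partial x^1=0$. Two consequences follow immediately. First, $\partial/\partial x^1=\partial/\partial u^1=e$, so the invariance of the metric under the unit gives $c_{1ab}=\tilde\eta_{ab}$. Second, for $a,b,c\in\{2,3\}$ the factors $\partial u^i/\partial x^a$ are supported on $i\in\{2,3\}$, so $c_{abc}$ is a combination of canonical components with all indices in $\{2,3\}$ — all of which vanish. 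Thus, in flat coordinates, $c_{abc}=0$ whenever no index equals $1$, and the components with at least one index $1$ are the constants $\tilde\eta_{ab}$.

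Next I would pin down the constant metric $\tilde\eta$. For $d\neq0$ one has $C_1=0$, and a short transformation (or the flatness guaranteed by the previous Proposition together with a one-line evaluation) gives $\tilde\eta_{12}=\tilde\eta_{33}=C_3$ and all other entries zero; in particular the constant $C_2$ is absorbed by the nonlinear part of the coordinate change and disappears. For $d=0$ flat and canonical coordinates coincide, $\eta$ is already constant, and $\tilde\eta$ retains the entries $C_1,C_2,C_3$ of \eqref{metric3}. Inserting these data into $\partial_a\partial_b\partial_c F=c_{abc}$ leaves only the nonzero third derivatives $\partial_1^2\partial_2F=\partial_1\partial_3^2F=C_3$ when $d\neq0$, and $\partial_1^3F=C_1$, $\partial_1^2\partial_2F=C_2$, $\partial_1^2\partial_3F=C_3$, $\partial_1\partial_2^2F=C_3$ when $d=0$.

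Integrating these constant-coefficient conditions is elementary and produces precisely the two stated cubics, the remaining freedom being exactly the second-order polynomial terms, which have vanishing third derivatives. The only genuine work — and the step where I expect to have to be careful — is verifying that the metric stays constant and computing $\tilde\eta$ uniformly across the three sub-cases of $d\neq0$, since the flat coordinates themselves have different closed forms there. The block structure of $\partial u/\partial x$ is exactly what makes this manageable: it forces both the unit relation $c_{1ab}=\tilde\eta_{ab}$ and the vanishing of all components $c_{abc}$ with $a,b,c\in\{2,3\}$, and hence guarantees the same prepotential in every sub-case.
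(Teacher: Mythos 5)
Your proposal is correct and is essentially the computation the paper has in mind (the paper only remarks that ``the proof is a straightforward computation''): one pulls back $c_{abc}=\partial_a\partial_b\partial_c F$ to the flat coordinates of the preceding Proposition and integrates. Your organizing observation --- that $x^1=u^1$ while $x^2,x^3$ depend only on $u^2,u^3$, so the Jacobian is block-diagonal, forcing $c_{1bc}=\tilde\eta_{bc}$ and the vanishing of all $c_{abc}$ with $a,b,c\in\{2,3\}$ --- is accurate in all sub-cases $d\notin\{0,1,2\}$, $d=1$, $d=2$, and the resulting constant metric $\tilde\eta_{12}=\tilde\eta_{33}=C_3$ (others zero) does yield exactly the stated cubics.
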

	The proof is a straightforward computation.
\newline
\newline
Analogous results can be achieved for the case of \eqref{2_caso4}, as presented below.
	\begin{prop}
		In the case of \eqref{2_caso4} flat coordinates are given by
		\begin{align}
			x^1(u^1,u^2,u^3)&=\frac{C_1C_3\,u^1u^2+C_2C_3(u^2)^2+C_1\,u^1u^3+C_2\,u^2u^3-C_4(u^2)^2}{C_1\,(C_3\,u^2+u^3)}\notag\\
			x^2(u^1,u^2,u^3)&=\big[\big(-C_2C_3\sqrt{C_1C_4}-C_4(C_1-C_4+C_2C_3)\big)(u^2)^{\frac{2C_4-\sqrt{C_1C_4}}{C_4}}\notag\\&-C_2\,u^3(u^2)^{\frac{C_4-\sqrt{C_1C_4}}{C_4}}(C_4+\sqrt{C_1C_4})]\frac{1}{C_4(C_1-C_4)\,(C_3\,u^2+u^3)}\notag\\
			x^3(u^1,u^2,u^3)&=\big[\big(C_2C_3\sqrt{C_1C_4}-C_4(C_1-C_4+C_2C_3)\big)(u^2)^{\frac{2C_4+\sqrt{C_1C_4}}{C_4}}\notag\\&-C_2\,u^3(u^2)^{\frac{C_4+\sqrt{C_1C_4}}{C_4}}(C_4-\sqrt{C_1C_4})]\frac{1}{C_4(C_1-C_4)\,(C_3\,u^2+u^3)}\notag
		\end{align}
		when $d=0$, $C_1\neq0$ and $C_1\neq C_4$, by
		\begin{align}
			&x^1(u^1,u^2,u^3)=u^1+\frac{(u^2)^2(\ln{u^2})^2}{2\,(C_3\,u^2+u^3)}+\frac{C_2\,u^2\big(2\,\ln{u^2}-(\ln{u^2})^2-2\big)}{2\,C_4}\notag\\
			&x^2(u^1,u^2,u^3)=-\frac{(u^2)^2\,\ln{u^2}}{C_3\,u^2+u^3}+\frac{C_2\,u^2\big(\ln{u^2}-1\big)}{C_4}\notag\\
			&x^3(u^1,u^2,u^3)=-\frac{(u^2)^2}{C_3\,u^2+u^3}+\frac{C_2\,u^2}{C_4}\notag
		\end{align}
		when $d=0$ and $C_1=0$, by
		\begin{align}
			&x^1(u^1,u^2,u^3)=u^1-\frac{(u^2)^2}{C_3\,u^2+u^3}+\frac{C_2\,u^2}{C_4}\notag\\
			&x^2(u^1,u^2,u^3)=-\frac{(u^2)^3}{2\,\big(C_3\,u^2+u^3\big)}+\frac{C_2\,(u^2)^2}{4\,C_4}\notag\\
			&x^3(u^1,u^2,u^3)=-\frac{u^2}{C_3\,u^2+u^3}+\frac{C_2\,\ln{u^2}}{C_4}\notag
		\end{align}
		when $d=0$ and $C_1=C_4$, by
		\begin{align}
			x^1(u^1,u^2,u^3)&=\big[\big(C_4\,u^1(d)^2-2C_2\,u^2\big)u^2(C_3)^2+\big(u^1u^3(d)^2\notag\\&+2(u^2)^2\big)C_3C_4+2C_2(u^3)^2\big]\frac{1}{C_3C_4(d)^2(C_3\,u^2+u^3)}\notag\\
			x^2(u^1,u^2,u^3)&=\frac{\big(C_2C_3-C_4\,(1-d)\big)(u^2)^{2-d}+C_2\,u^3(u^2)^{1-d}}{C_4\,(1-d)\big(C_3\,u^2+u^3\big)}\notag\\
			x^3(u^1,u^2,u^3)&=\frac{2C_2\,u^3(u^2)^{\frac{2-d}{2}}-\big(C_4(2-d)-2C_2C_3\big)(u^2)^{\frac{4-d}{2}}}{C_4\,(2-d)\big(C_3\,u^2+u^3\big)}\notag
		\end{align}
		when $d\notin\{0,1,2\}$ and $C_3\neq0$, by
		\begin{align}
			x^1(u^1,u^2,u^3)&=\frac{\big(C_4\,(d)^2\,u^1-2C_2\,u^2\big)\,u^3+2C_4(u^2)^2}{C_4\,(d)^2\,u^3}\notag\\
			x^2(u^1,u^2,u^3)&=\frac{2C_2\,u^3(u^2)^{\frac{2-d}{2}}-C_4\,(2-d)(u^2)^{\frac{4-d}{2}}}{C_4\,(2-d)\,u^3}\notag\\
			x^3(u^1,u^2,u^3)&=\frac{C_2\,u^3(u^2)^{1-d}-C_4\,(1-d)(u^2)^{2-d}}{C_4\,(1-d)\,u^3}\notag
		\end{align}
		when $d\notin\{0,1,2\}$ and $C_3=0$, by
		\begin{align}
			&x^1(u^1,u^2,u^3)=u^1+\frac{2(u^2)^2}{C_3\,u^2+u^3}-\frac{2C_2\,u^2}{C_4}\notag\\
			&x^2(u^1,u^2,u^3)=-\frac{u^2}{C_3\,u^2+u^3}+\frac{C_2\,\ln{u^2}}{C_4}\notag\\
			&x^3(u^1,u^2,u^3)=-\frac{(u^2)^{\frac{3}{2}}}{C_3\,u^2+u^3}+\frac{2C_2\,\sqrt{u^2}}{C_4}\notag
		\end{align}
		when $d=1$, by
		\begin{align}
			&x^1(u^1,u^2,u^3)=u^1+\frac{(u^2)^2}{2\big(C_3\,u^2+u^3\big)}-\frac{C_2\,u^2}{2C_4}\notag\\
			&x^2(u^1,u^2,u^3)=-\frac{u^2}{C_3\,u^2+u^3}+\frac{C_2\,\ln{u^2}}{C_4}\notag\\
			&x^3(u^1,u^2,u^3)=-\frac{1}{C_3\,u^2+u^3}-\frac{C_2}{C_4\,u^2}\notag
		\end{align}
		when $d=2$.
	\end{prop}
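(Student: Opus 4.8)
The plan is to use the defining property of flat coordinates: a system $(x^1,x^2,x^3)$ is flat for $\eta$ precisely when the gradient one-forms $dx^a$ are $\nabla$-parallel, i.e. $\partial_i\partial_j x^a=\Gamma^k_{ij}\,\partial_k x^a$, equivalently when $\eta$ has constant components in the $x$-coordinates. Before solving this system I would record a structural simplification valid throughout case \eqref{2_caso4}. Since there the metric \eqref{metric3} is independent of $u^1$ (because $F_1=C_1$ is constant and $F_2,F_3$ depend only on $z=u^3/u^2$), one has $\partial_1\eta_{ij}=0$; combined with $\eta_{1l}=\partial_l H$ coming from the metric potential \eqref{potentialDHmetric}, which gives $\partial_j\eta_{1l}-\partial_l\eta_{1j}=0$ exactly as in \eqref{starconj}, this forces $\Gamma^k_{1j}=0$ for all $j,k$. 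Hence every parallel one-form has constant first component, so every flat coordinate must have the form $x=c\,u^1+B(u^2,u^3)$ with $c$ a constant. This already explains why $u^1$ enters the listed formulas only linearly; for instance the apparent fraction in the $d=0$, $C_1\neq0$ case collapses to $u^1$ plus a function of $u^2,u^3$.

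It then remains to determine $B(u^2,u^3)$, that is, to solve the inhomogeneous linear system $\partial_i\xi_j=\Gamma^1_{ij}\,c+\Gamma^2_{ij}\,\xi_2+\Gamma^3_{ij}\,\xi_3$ for $\xi_i=\partial_i B$ with $i,j\in\{2,3\}$, where the Christoffel symbols are rational in $F_2,F_3$ and their $z$-derivatives (readily computed from the inverse metric, whose nonzero entries are, up to the factor $(u^2)^d$, the quantities $1/F_3$, $-F_2/F_3^2$ and $(F_2^2-F_1F_3)/F_3^3$). I would exploit the homogeneity $\mathcal{L}_E\eta=(2-d)\eta$: since the metric is quasi-homogeneous in $(u^2,u^3)$, I would seek quasi-homogeneous solutions $\xi_i=(u^2)^{\rho}\,g_i(z)$, which reduces the PDE system to a linear ODE system in the single variable $z$. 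Its characteristic exponents $\rho$ are controlled by $d$ and by the ratio $C_1/C_4$; they turn out to be $1-d$, $\tfrac{2-d}{2}$ and the two roots $\tfrac{2C_4\pm\sqrt{C_1C_4}}{C_4}$ appearing in the statement. Recalling that $C_1$ vanishes whenever $d\neq0$, the genuine square-root splitting is present only at $d=0$, while for $d\neq0$ one always lands on the $C_1=0$ branch; integrating $dB=\xi_2\,du^2+\xi_3\,du^3$ then yields the listed coordinates.

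The main obstacle, and the reason for the long case list, is the degeneration of this exponent structure. The generic power-law basis fails whenever two characteristic exponents coincide, or a denominator among $\{d,\,1-d,\,2-d,\,C_4,\,C_1-C_4\}$ vanishes, and there the power laws must be replaced by solutions carrying a factor of $\ln u^2$ or an extra polynomial factor. Concretely, $C_1=0$ forces a repeated exponent and produces the $(\ln u^2)^2$ and $\ln u^2$ terms; $C_1=C_4$ and $C_3=0$ are the boundary parameter values at which the generic expressions' denominators degenerate; and $d\in\{1,2\}$ are the resonances at which a power of $u^2$ collides with $1-d$ or $\tfrac{2-d}{2}$, again forcing logarithms. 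Each of the seven cases in the statement corresponds to exactly one such degeneration, together with the generic case $d\notin\{0,1,2\}$.

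Having organized the computation this way, the verification is finite and uniform: in each case it suffices to check directly that the exhibited $x^1,x^2,x^3$ satisfy $\partial_i\partial_j x^a=\Gamma^k_{ij}\,\partial_k x^a$, equivalently that $\tilde\eta^{ab}=\eta^{ij}\,\partial_i x^a\,\partial_j x^b$ is a constant matrix. No single case is conceptually hard; the real work is the bookkeeping across the degenerations above, which is precisely what the case split in the statement records.
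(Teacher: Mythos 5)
Your proposal is correct in substance and, if anything, more explicit than the paper, which states the result without proof (the analogous proposition for the case \eqref{2_caso3} is dispatched with ``the proof is a straightforward computation'', and the same is implied here). The two routes agree on the decisive final step --- checking that $\tilde\eta^{ab}=\eta^{ij}\,\partial_i x^a\,\partial_j x^b$ is constant in each of the seven cases --- but you add a genuine organizing principle that the paper leaves implicit. Your observation that $\Gamma^k_{1j}=0$ is correct (it is immediate from the axiom \eqref{EqIII}, $\nabla e=0$ with $e=\partial_1$, so you need not rederive it from the Hankel structure), and it correctly forces every flat coordinate to be of the form $c\,u^1+B(u^2,u^3)$; one can check that each of the listed $x^1$ indeed collapses to $u^1$ plus a function of $(u^2,u^3)$ after cancelling $C_3u^2+u^3$. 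Your reduction to a linear ODE system in $z=u^3/u^2$ via the quasi-homogeneous ansatz $\xi_i=(u^2)^{\rho}g_i(z)$, and the identification of the seven cases with resonances of the characteristic exponents ($C_1=0$ giving a double root and hence the $\ln u^2$ and $(\ln u^2)^2$ terms, $C_1=C_4$ and $C_3=0$ as boundary values of the generic denominators, $d\in\{1,2\}$ as collisions of powers of $u^2$), is exactly the structure that the paper's case list records but does not explain. What your write-up buys is a derivation of the formulas rather than a mere verification; what it still owes is the actual computation of the exponents and the integration in each branch, which you correctly reduce to finite bookkeeping and which the direct check of constancy of $\tilde\eta$ certifies in any event.
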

	Here are explicit expressions for the Dubrovin-Frobenius potential in some selected cases.
	\begin{es}
		Let us fix $d=0$, $C_1=C_4$ and $C_2=0$. In flat coordinates the metric becomes
		\[
		\tilde{\eta}=\begin{bmatrix}
			C_4&0&0\\0&0&-C_4\\0&-C_4&0
		\end{bmatrix}
		\]
		and the prepotential is given by
		\begin{equation}
			F(x^1,x^2,x^3)=\frac{2\sqrt{2}}{3}C_4\,(x^2)^{\frac{3}{2}}(x^3)^{\frac{3}{2}}+\frac{C_4}{6}(x^1)^{3}-C_4\,x^1\,x^2\,x^3
		\end{equation}
		up to second-order polynomial terms. In flat coordinates the unit and the Euler vector fields are respectively written as
		\begin{equation}
			\notag
			\tilde{e}=\tilde{\partial}_1
		\end{equation}
		and
		\begin{equation}
			\notag
			\tilde{E}=x^1\,\tilde{\partial}_1+2\,x^2\,\tilde{\partial}_2.
		\end{equation}
	\end{es}
	\begin{es}
		Let us fix $d=2$ and $C_2=0$. In flat coordinates the metric becomes
		\[
		\tilde{\eta}=\begin{bmatrix}
			0&0&C_4\\0&C_4&0\\C_4&0&0
		\end{bmatrix}
		\]
		and the prepotential is given by
		\begin{equation}
			F(x^1,x^2,x^3)=\frac{C_4}{2}\,(x^1)^{2}\,x^3+\frac{C_4}{2}\,x^1\,(x^2)^2+\frac{C_4}{8}\,\frac{(x^2)^4}{x^3}
		\end{equation}
		up to second-order polynomial terms. In flat coordinates the unit and the Euler vector fields are respectively written as
		\begin{equation}
			\notag
			\tilde{e}=\tilde{\partial}_1
		\end{equation}
		and
		\begin{equation}
			\notag
			\tilde{E}=x^1\,\tilde{\partial}_1-x^3\,\tilde{\partial}_3.
		\end{equation}
	\end{es}
	\begin{es}
		Let us fix $d=2$ and $C_2=1$. In flat coordinates the metric becomes
		\[
		\tilde{\eta}=\begin{bmatrix}
			0&0&C_4\\0&C_4&0\\C_4&0&0
		\end{bmatrix}
		\]
		and the prepotential is given by
		\begin{align}
			F(x^1,x^2,x^3)&=\frac{1}{24(C_4)^3x^3}\,\bigg(3\,W\big(C_4\,x^3\,e^{C_4\,x^2-1}\big)^4+22\,W\big(C_4\,x^3\,e^{C_4\,x^2-1}\big)^3\\&+63\,W\big(C_4\,x^3\,e^{C_4\,x^2-1}\big)^2+72\,W\big(C_4\,x^3\,e^{C_4\,x^2-1}\big)\bigg)\notag\\&+\frac{C_4}{2}\,(x^1)^{2}\,x^3+\frac{C_4}{2}\,x^1\,(x^2)^2\notag
		\end{align}
		up to second-order polynomial terms, where $W$ denotes the principal branch of the Lambert \emph{W} function. In flat coordinates the unit and the Euler vector fields are respectively written as
		\begin{equation}
			\notag
			\tilde{e}=\tilde{\partial}_1
		\end{equation}
		and
		\begin{equation}
			\notag
			\tilde{E}=x^1\,\tilde{\partial}_1+\frac{1}{C_4}\,\tilde{\partial}_2-x^3\,\tilde{\partial}_3.
		\end{equation}
	\end{es}
	
	\subsection{Dimension $n=4$}
	Let $M$ be a four-dimensional Dubrovin-Frobenius manifold with product $\circ$, metric $\eta$, unit vector field $e$ and Euler vector field $E$. Let us require $M$ to be regular and the operator $L=E\,\circ$ to have a single Jordan block near a point $m\in M$. The unit and the Euler vector fields read respectively $e=\partial_1$ and $E=u^1\partial_1+u^2\partial_2+u^3\partial_3+u^4\partial_4$. We already know from \eqref{mymetric} that the metric is of the form
	\begin{align}
		\label{metric4}
		\eta=(u^2)^{-d}\,\begin{bmatrix}
			F_1(\frac{u^3}{u^2},\frac{u^4}{u^2})&F_2(\frac{u^3}{u^2},\frac{u^4}{u^2})&F_3(\frac{u^3}{u^2},\frac{u^4}{u^2})&F_4(\frac{u^3}{u^2},\frac{u^4}{u^2})\\F_2(\frac{u^3}{u^2},\frac{u^4}{u^2})&F_3(\frac{u^3}{u^2},\frac{u^4}{u^2})&F_4(\frac{u^3}{u^2},\frac{u^4}{u^2})&0\\F_3(\frac{u^3}{u^2},\frac{u^4}{u^2})&F_4(\frac{u^3}{u^2},\frac{u^4}{u^2})&0&0\\F_4(\frac{u^3}{u^2},\frac{u^4}{u^2})&0&0&0
		\end{bmatrix}
	\end{align}
	for some functions $F_1$, $F_2$, $F_3$, $F_4$ of the variables $z=\frac{u^3}{u^2}$, $w=\frac{u^4}{u^2}$. In particular, $F_1$ is equal to a constant $C_1$ which vanishes whenever $d\neq0$ and from \eqref{F2} and \eqref{Fj} we know that $F_2$, $F_3$, $F_4$ can be expressed as
	\begin{align}
		&F_2(z,w)=-z\,\partial_zf(z,w)-w\,\partial_wf(z,w)-(d-1)\,f(z,w)+C_2\label{4.1_F2}\\
		&F_3(z,w)=\partial_zf(z,w)\label{4.1_F3}\\
		&F_4(z,w)=\partial_wf(z,w)\label{4.1_F4}
	\end{align}
	for some function $f(z,w)$ and some constant $C_2$.	By the flatness conditions, two expressions for $f$ are possible, as shown below. This fully classifies regular four-dimensional Dubrovin-Frobenius manifolds whose operator $L=E\,\circ$ has a single Jordan block.
	\begin{thm}
		The function $f$ realizing \eqref{F2}, \eqref{Fj} is either provided by
		\begin{equation}
			\label{4.1fcaso1}
			f(z,w)=C_3\,w\,e^{C_4\,z}+h(z)
		\end{equation}
		for some constants $C_3$, $C_4$ and some functiton $h(z)$ which is solution to
		\begin{equation}
			\label{eqdiffh}
			h'''(z)-2\,C_4\,h''(z)+C_4^2\,h'(z)+2\,C_3\,C_4\,e^{C_4\,z}=0
		\end{equation}
		or by
		\begin{equation}
			\label{4.1fcaso2}
			f(z,w)=C_3-\frac{A(z)}{2\,B(z)+w}
		\end{equation}
		for some constant $C_3$ and solutions $A(z)$, $B(z)$ to the following system of ODEs:
		\begin{align}
			&A''\,A-(A')^2+2\,\big(C_2+(1-d)\,C_3\big)\,A=0\label{4.1eqA}\\
			&A\,B'''-A'\,\big(B''+1\big)+2\,\big(C_2+(1-d)\,C_3\big)\big(B'+z\big)+C_1=0.\label{4.1eqB}
		\end{align}
	\end{thm}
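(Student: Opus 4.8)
The relations \eqref{4.1_F2}--\eqref{4.1_F4} already encode the invariance \eqref{EqI}, the constancy of the unit \eqref{EqIII}, the potentiality of the metric, and the homogeneity \eqref{homog}, exactly as was done in the three-dimensional case; the only axiom still to be imposed on \eqref{metric4} is the flatness \eqref{EqXI}. So the plan is to compute the Riemann tensor of \eqref{metric4} directly and read off the equations forced on $f(z,w)$, after which I would verify (as in dimension three) that once flatness holds the residual conditions \eqref{EqII} and \eqref{EqVIII} are automatically satisfied, so that nothing beyond \eqref{EqXI} needs to be imposed.

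First I would invert the metric. Since the Hankel matrix in \eqref{metric4} is anti-triangular, its determinant is $F_4^4$ (so that $F_4\neq0$ is exactly the non-degeneracy condition) and its inverse is again anti-triangular with entries polynomial in $F_1,F_2,F_3$ over powers of $F_4$. Because $\eta$ is independent of $u^1$ (hence $\partial_1$ is covariantly constant and every curvature component carrying a $u^1$-index vanishes) and depends on $u^2,u^3,u^4$ only through the prefactor $(u^2)^{-d}$ and the two ratios $z=u^3/u^2$, $w=u^4/u^2$, the Christoffel symbols and the $R^m_{ijk}$ reduce to expressions in $F_2,F_3,F_4$ and their $z,w$-derivatives. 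Substituting \eqref{4.1_F2}--\eqref{4.1_F4} then turns each curvature equation into a PDE for $f$.

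The key equation I expect to isolate first is the four-dimensional analogue of the Liouville equation of the $n=3$ case: the ``corner'' curvature component, built only from the deepest anti-diagonal entry $F_4$ and its $w$-derivatives, should yield
\[
2\,F_4\,\partial_w^2F_4-3\,(\partial_wF_4)^2=0.
\]
Integrating this ODE in $w$ produces precisely two branches. If $\partial_wF_4=0$ then $F_4=F_4(z)$ and, by \eqref{4.1_F4}, $f=w\,F_4(z)+h(z)$; if $\partial_wF_4\neq0$ then $F_4=A(z)\big/\big(w+2B(z)\big)^2$ for some functions $A,B$ of $z$, and integrating $\partial_wf=F_4$ gives $f=C_3-A(z)\big/\big(2B(z)+w\big)$, the additive $w$-independent term being fixed by homogeneity. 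These are the templates \eqref{4.1fcaso1} and \eqref{4.1fcaso2}.

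Finally I would feed each branch back into the remaining, non-corner flatness equations. In the first branch the surviving conditions should force $F_4(z)=C_3e^{C_4z}$ (equivalently $F_4F_4''-(F_4')^2=0$) and collapse to the single third-order ODE \eqref{eqdiffh} for $h$; in the second branch they should collapse to the coupled system \eqref{4.1eqA}--\eqref{4.1eqB} for $A$ and $B$. The converse, that substituting \eqref{4.1fcaso1} or \eqref{4.1fcaso2} makes all $R^m_{ijk}$ vanish, then shows the list is exhaustive. I expect the main obstacle to be organizational rather than conceptual: a $4\times4$ metric has many a priori nonzero curvature components, and the real work is to show that, after using \eqref{4.1_F2}--\eqref{4.1_F4} together with the corner equation, the whole system decouples cleanly into the stated ODEs with no hidden extra constraints, and that the $w$-independent integration ``constants'' are pinned down exactly as in \eqref{eqdiffh} and \eqref{4.1eqA}--\eqref{4.1eqB}.
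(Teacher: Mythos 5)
Your proposal follows essentially the same route as the paper's proof: the paper likewise isolates a single ``corner'' curvature condition (it uses $R^1_{243}=0$), which in terms of $f$ reads $2\,\partial_wf\,\partial_w^3f-3(\partial_w^2f)^2=0$, i.e.\ exactly your $2F_4\partial_w^2F_4-3(\partial_wF_4)^2=0$; it then splits into the branches $\partial_w^2f=0$ and $\partial_w^2f\neq0$ and feeds each template back into the remaining curvature components ($R^3_{234}=0$ and $R^2_{322}=0$) to arrive at \eqref{eqdiffh} and at \eqref{4.1eqA}--\eqref{4.1eqB}, checking finally that no further conditions arise.

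There is, however, one concrete misstep. In the second branch you assert that the additive $w$-independent term obtained by integrating $\partial_wf=F_4=A/(w+2B)^2$ is ``fixed by homogeneity'' to be a constant. Homogeneity is already fully encoded in the ansatz that $f$ depends only on $z$ and $w$, so integration in $w$ leaves an arbitrary function $C(z)$, and replacing $f$ by $f+C(z)$ genuinely changes $F_3=\partial_zf$ and $F_2$ in \eqref{4.1_F2}, hence the metric \eqref{metric4}. The constancy of $C(z)$ is an additional consequence of flatness (the paper derives it from $R^3_{343}=0$), not of homogeneity; if you write $f=C_3-A/(2B+w)$ with constant $C_3$ before imposing the residual curvature equations, your argument does not establish that \eqref{4.1fcaso2} exhausts that branch. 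Your concluding step of verifying that all remaining $R^m_{ijk}$ collapse to the stated ODEs would in practice detect the extra constraint $C'(z)=0$, but the justification as written is wrong and should be replaced by the explicit curvature condition. Apart from this, the plan --- the determinant $F_4^4$ as the non-degeneracy condition, the vanishing of curvature components involving the $u^1$-direction, and the reduction of each branch to \eqref{eqdiffh} and \eqref{4.1eqA}--\eqref{4.1eqB} --- matches the paper's proof.
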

	\begin{proof}
		By requiring that $R^1_{243}=0$ we get
		\begin{equation}
			\label{eqdipartenza4}
			2\,\partial_wf\,\partial^3_wf-3(\partial^2_wf)^2=0.
		\end{equation}
		Let us distinguish two cases: $\partial^2_wf\ne 0$ and $\partial^2_wf=0$. In the first case we obtain
		\begin{equation}
			\label{4fgen}
			f(z,w)=C(z)-\frac{A(z)}{2\,B(z)+w}
		\end{equation}
		for some functions $A(z)$, $B(z)$, $C(z)$ while in the second one we obtain
		\begin{equation}
			\label{4fsing}
			f(z,w)=w\,h_1(z)+h_2(z)
		\end{equation}
		for some functions $h_1(z)$, $h_2(z)$.
		
		If $f$ is as in \eqref{4fgen} then condition $R^3_{343}=0$ implies that the function $C(z)$ must be equal to a constant $C_3$. Conditions $R^3_{234}=0$ and $R^2_{322}=0$ yields respectively
		\begin{equation}
			\notag
			A''\,A-(A')^2+2\,\big(C_2+(1-d)\,C_3\big)\,A=0
		\end{equation}
		and
		\begin{equation}
			\notag
			A\,B'''-A'\,\big(B''+1\big)+2\,\big(C_2+(1-d)\,C_3\big)\big(B'+z\big)+C_1=0.
		\end{equation}
		All the other conditions in \eqref{EqI}, \eqref{EqII}, \eqref{EqIII}, \eqref{homog}, \eqref{EqVIII}, \eqref{EqXI} hold without imposing more.
		
		If, on the other hand, $f$ is as in \eqref{4fsing}, condition $R^3_{234}=0$ implies that
		\begin{equation}
			\label{h1sing}
			h_1(z)\,h_1''(z)-(h_1'(z))^2=0.
		\end{equation}
		Solutions to \eqref{h1sing} are given by $h_1(z)=C_3\,e^{C_4\,z}$ for some constants $C_3$ and $C_4$, so that 
		\begin{equation}
			\notag
			f(z,w)=C_3\,w\,e^{C_4\,z}+h_2(z).
		\end{equation}
		By imposing condition $R^2_{322}=0$ we get
		\begin{equation}
			\notag
			h_2'''(z)-2\,C_4\,h_2''(z)+C_4^2\,h_2'(z)+2\,C_3\,C_4\,e^{C_4\,z}=0
		\end{equation}
		that yields
		\begin{equation}
			\notag
			h_2(z)=C_7-\frac{e^{C_4\,z}}{C_4^2}\bigg[C_3\,C_4^2\,z^2-C_4\,(2\,C_3+C_5)\,z-C_4\,C_6+2\,C_3+C_5\bigg]
		\end{equation}
		when $C_4\neq0$ and
		\begin{equation}
			\notag
			h_2(z)=C_5\,z^2+C_6\,z+C_7
		\end{equation}
		when $C_4=0$ for some constants $C_5$, $C_6$, $C_7$, so that $f$ becomes respectively
		\begin{equation}
			\label{fsing2_4}
			f(z,w)=C_3\,w\,e^{C_4\,z}+C_7-\frac{e^{C_4\,z}}{C_4^2}\bigg[C_3\,C_4^2\,z^2-C_4\,(2\,C_3+C_5)\,z-C_4\,C_6+2\,C_3+C_5\bigg]
		\end{equation}
		and
		\begin{equation}
			\label{fsing2_4bis}
			f(z,w)=C_3\,w\,e^{C_4\,z}+C_5\,z^2+C_6\,z+C_7.
		\end{equation}
		In both cases it turns out that all the other conditions in \eqref{EqI}, \eqref{EqII}, \eqref{EqIII}, \eqref{homog}, \eqref{EqVIII}, \eqref{EqXI} hold without imposing more. 
	\end{proof}
	\begin{prop}\label{AB}
		The functions $A(z)$ and $B(z)$ appearing in \eqref{4.1eqA} and \eqref{4.1eqB} are expressed via hyperbolic functions and second-order polynomials:
		\begin{eqnarray*}
			A(z)&=&\frac{C_2+(1-d)\,C_3}{C_4^2}\,\sinh^2\big(C_4(z+C_5)\big)\\
			B(z)&=&C_6\cosh(2C_4(z+C_5))+C_7\sinh(2C_4(z+C_5))\\
			&&-\frac{z}{2}\Big(\frac{C_1}{C_2+(1-d)\,C_3}+4C_4C_7\Big)-\frac{z^2}{2}+C_8.
		\end{eqnarray*}
		for some constants $C_4$, $C_5$, $C_6$, $C_7$, $C_8$.
	\end{prop}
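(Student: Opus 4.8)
The plan is to solve the two equations \eqref{4.1eqA} and \eqref{4.1eqB} in sequence, exploiting two structural features: \eqref{4.1eqA} is autonomous (no explicit $z$), and once $A$ is known \eqref{4.1eqB} is \emph{linear} in the derivatives of $B$ (since $B$ itself never appears). Throughout I set $\kappa:=C_2+(1-d)\,C_3$ and assume $\kappa\neq0$, the stated formulae containing $C_1/\kappa$; the degenerate case $\kappa=0$ would have to be recorded separately.

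First I would integrate \eqref{4.1eqA}, that is $A\,A''-(A')^2+2\kappa A=0$. Because $z$ is absent I regard $A'$ as a function of $A$, so $A''=A'\,\frac{dA'}{dA}$; writing $q=(A')^2$ converts \eqref{4.1eqA} into the linear first-order equation $A\,q'-2q=-4\kappa A$, whose general solution is $q=c\,A^2+4\kappa A$ for an integration constant $c$. Hence $(A')^2=c\,A^2+4\kappa A$, which is separable. Setting $c=4C_4^2$ and integrating (completing the square under the radical) gives $A(z)=\frac{\kappa}{C_4^2}\sinh^2\!\big(C_4(z+C_5)\big)$, the two integration constants surfacing as $C_4$ (from $c$) and the translation $C_5$; the $c<0$ branch is absorbed by allowing $C_4$ imaginary, consistently with the holomorphic setting.

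Next, since $B$ enters \eqref{4.1eqB} only through $B',B'',B'''$, I set $v=B'$ and rewrite \eqref{4.1eqB} as the second-order linear equation
\[ A\,v''-A'\,v'+2\kappa\,v=A'-2\kappa z-C_1. \]
The observation that makes this immediate is that both homogeneous solutions are already at hand: $v=A$ solves the homogeneous equation because that equation \emph{is} \eqref{4.1eqA}, while $v=A'$ solves it because differentiating \eqref{4.1eqA} yields $A\,A'''-A'\,A''+2\kappa A'=0$. Their Wronskian is $A\,A''-(A')^2=-2\kappa A\neq0$, so they are independent and span the homogeneous space, sparing any variation-of-parameters computation. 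A particular solution is found by inspection: $v_p=-z-\frac{C_1}{2\kappa}$ returns exactly $A'-2\kappa z-C_1$ on the left.

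Therefore $B'=\lambda_1A'+\lambda_2A-z-\frac{C_1}{2\kappa}$ for constants $\lambda_1,\lambda_2$. Substituting $A=\frac{\kappa}{2C_4^2}\big(\cosh(2C_4(z+C_5))-1\big)$ and $A'=\frac{\kappa}{C_4}\sinh(2C_4(z+C_5))$, integrating once in $z$, and relabelling ($C_6\propto\lambda_1$, $C_7\propto\lambda_2$, plus the constant of integration $C_8$) reproduces the stated $B(z)$; in particular the linear term collects the $-\frac{C_1}{2\kappa}$ from $v_p$ together with the $z$-contribution from integrating $\lambda_2A$, giving precisely $-\frac{z}{2}\big(\frac{C_1}{\kappa}+4C_4C_7\big)$. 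I expect the only real obstacle to be the first step: correctly integrating the nonlinear equation \eqref{4.1eqA} and parameterising its general solution by $\sinh^2$, with the bookkeeping of the constant $c=4C_4^2$ and of the degenerate/trigonometric branches. Once $A$ is secured, the remainder is the routine solution of a linear ODE, made transparent by recognising $A$ and $A'$ as its homogeneous solutions.
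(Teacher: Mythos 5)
Your proof is correct; the paper states Proposition \ref{AB} without any proof (it is presented as a computation), so your argument actually supplies the missing derivation, and it is the natural one. I checked the key steps: the reduction of \eqref{4.1eqA} via $q=(A')^2$ to $Aq'-2q=-4\kappa A$ with general solution $(A')^2=cA^2+4\kappa A$, the identification $c=4C_4^2$ giving $A=\frac{\kappa}{C_4^2}\sinh^2(C_4(z+C_5))$, the observation that $v=A$ and $v=A'$ solve the homogeneous part of the linear equation for $v=B'$ (the first because it \emph{is} \eqref{4.1eqA}, the second by differentiating it, with Wronskian $-2\kappa A\neq0$), the particular solution $v_p=-z-\frac{C_1}{2\kappa}$, and the final integration with the relabelling $C_6=\lambda_1\frac{\kappa}{2C_4^2}$, $C_7=\lambda_2\frac{\kappa}{4C_4^3}$, which reproduces the linear coefficient $-\frac{1}{2}\big(\frac{C_1}{\kappa}+4C_4C_7\big)$ exactly. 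Your caveats are the right ones: $\kappa=C_2+(1-d)C_3\neq0$ is implicitly assumed by the statement itself (it appears in a denominator), the degenerate branch $c=0$ is the $C_4\to0$ limit $A=\kappa(z+C_5)^2$, and the spurious constant solution $A=-\kappa/C_4^2$ of the first integral does not satisfy the original equation \eqref{4.1eqA} and is correctly discarded.
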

	Below flat coordinates are computed for selected other cases, together with some Dubrovin-Frobenius prepotentials.
	\begin{es}
		Let us consider the case \eqref{4.1fcaso1} with $C_3=1$, $C_4=0$ and $d\neq0$. Equation \eqref{eqdiffh} becomes $h'''(z)=0$ yielding $h(z)=a\,z^2+b\,z+c$ for some constants $a,b,c$. In particular we choose $a=c=0$ and $b=1$, so that $h(z)=z$ and $f(z,w)=z+w$. When $d\neq1$, in the flat coordinates
		\begin{align}
			&x^1(u^1,u^2,u^3,u^4)=u^1\notag\\
			&x^2(u^1,u^2,u^3,u^4)=(u^2)^{-d}\big(u^3+u^4\big)\notag\\
			&x^3(u^1,u^2,u^3,u^4)=\frac{1}{2}u^2+u^3\notag\\
			&x^4(u^1,u^2,u^3,u^4)=\frac{1}{1-d}(u^2)^{1-d}\notag
		\end{align}
		we have
		\[\tilde{\eta}=\begin{bmatrix}
			0&1&0&C_2\\1&0&0&0\\0&0&0&1\\C_2&0&1&0
		\end{bmatrix},\,\,
	        \tilde{e}=\tilde{\partial}_1,\,\,
		\tilde{E}=x^1\,\tilde{\partial}_1+(1-d)\,x^2\,\tilde{\partial}_2+x^3\,\tilde{\partial}_3+(1-d)\,x^4\,\tilde{\partial}_4.\]
               Up to second-order polynomial terms, the prepotential is given by
                \[F(x^1,x^2,x^3,x^4)=\frac{C_2}{2}(x^1)^2x^4+x^1x^3x^4-\frac{3}{10}\sqrt[3]{9}(x^4)^\frac{5}{3}+\frac{1}{2}(x^1)^2x^2\]
		when $d=-2$, by
		\[F(x^1,x^2,x^3,x^4)=\frac{1}{4}(x^4)^2\ln{x^4}+\frac{C_2}{2}(x^1)^2x^4+x^1x^3x^4+\frac{1}{2}(x^1)^2x^2\]
		when $d=-1$, by
		\[F(x^1,x^2,x^3,x^4)=\frac{C_2}{2}(x^1)^2x^4+x^1x^3x^4+\frac{1}{2}(x^1)^2x^2\]
		when $d=2$. The case where $d=1$ must be treated separately. In the flat coordinates 
		\begin{align}
			&x^1(u^1,u^2,u^3,u^4)=u^1\notag\\
			&x^2(u^1,u^2,u^3,u^4)=\frac{u^3+u^4}{u^2}\notag\\
			&x^3(u^1,u^2,u^3,u^4)=\frac{1}{2}u^2+u^3\notag\\
			&x^4(u^1,u^2,u^3,u^4)=\ln{u^2}\notag
		\end{align}
		the unit and the Euler vector fields are given by
		\[\tilde{e}=\tilde{\partial}_1,\quad\tilde{E}=x^1\,\tilde{\partial}_1+x^3\,\tilde{\partial}_3+\tilde{\partial}_4.\]
		The metric is as the one for $d\neq1$ and up to second-order polynomial terms the prepotential is
		\[F(x^1,x^2,x^3,x^4)=\frac{1}{8}e^{2x^4}+\frac{C_2}{2}(x^1)^2x^4+x^1x^3x^4+\frac{1}{2}(x^1)^2x^2.\]
	\end{es}
	\begin{es}
		Let us consider the case \eqref{4.1fcaso1} with $C_3=C_4=1$ and $d\neq0$. Equation \eqref{eqdiffh} becomes $h'''(z)-2h''(z)+h'(z)+2e^z=0$ yielding $h(z)=a-(z^2+b\,z+c)e^z$ for some constants $a,b,c$. In particular we choose $a=b=c=0$, so that $h(z)=-z^2e^z$ and $f(z,w)=(w-z^2)\,e^z$. When $d\neq1,2$ the flat coordinates are
		\begin{align}
			&x^1(u^1,u^2,u^3,u^4)=u^1+\frac{u^2}{2(1-d)}\notag\\
			&x^2(u^1,u^2,u^3,u^4)=C_2\ln{u^2}-\frac{2(1-d)(u^2)^2+(u^3)^2-u^2u^4}{(u^2)^2}\,e^\frac{u^3}{u^2}\notag\\
			&x^3(u^1,u^2,u^3,u^4)=(u^2)^{-d-1}\big(u^2u^4-(u^3)^2\big)\,e^\frac{u^3}{u^2}+\frac{C_2(u^2)^{1-d}}{1-d}\notag\\
			&x^4(u^1,u^2,u^3,u^4)=\frac{(u^2)^{2-d}}{2-d}.\notag
		\end{align}
		For $d=-1$ in such coordinates we have
		\[\tilde{e}=\tilde{\partial}_1,\,\,\tilde{E}=x^1\,\tilde{\partial}_1+C_2\,\tilde{\partial}_2+2x^3\,\tilde{\partial}_3+3x^4\tilde{\partial}_4,\,\,
		\tilde{\eta}=\begin{bmatrix}
			0&0&1&0\\0&0&0&-\frac{1}{4}\\1&0&0&0\\0&-\frac{1}{4}&0&0
		\end{bmatrix}
		\]
		and up to second-order polynomial terms the prepotential is given by
		\begin{align}
			\notag
			F(x^1,x^2,x^3,x^4)&=-\frac{1}{32}\sqrt[3]{3}C_2(x^4)^\frac{4}{3}\ln\big({3\,x^4}\big)+\frac{15}{128}\sqrt[3]{3}C_2(x^4)^\frac{4}{3}\\&+\frac{1}{32}\sqrt[3]{9}(x^4)^\frac{2}{3}x^3+\frac{3}{32}\sqrt[3]{3}(x^4)^\frac{4}{3}x^2+\frac{1}{2}(x^1)^2x^3-\frac{1}{4}x^1x^2x^4.\notag
		\end{align}
		For $d=-2$ in flat coordinates we have
		\[\tilde{e}=\tilde{\partial}_1,\,\,\tilde{E}=x^1\,\tilde{\partial}_1+C_2\,\tilde{\partial}_2+3x^3\,\tilde{\partial}_3+4x^4\tilde{\partial}_4,\,\,
		\tilde{\eta}=\begin{bmatrix}
			0&0&1&0\\0&0&0&-\frac{1}{6}\\1&0&0&0\\0&-\frac{1}{6}&0&0
		\end{bmatrix}
		\]
		and up to second-order polynomial terms the prepotential is given by
		\begin{align}
			\notag
			F(x^1,x^2,x^3,x^4)&=-\frac{1}{45}\sqrt{2}C_2(x^4)^\frac{5}{4}\ln\big(2\sqrt{x^4}\big)+\frac{4}{75}\sqrt{2}C_2(x^4)^\frac{5}{4}\\&+\frac{2}{45}\sqrt{2}(x^4)^\frac{5}{4}x^2+\frac{1}{72}\sqrt{x^4}\,x^3+\frac{1}{2}(x^1)^2x^3-\frac{1}{6}x^1x^2x^4.\notag
		\end{align}
		The case $d=2$ must be treated separately. In the flat coordinates 
		\begin{align}
			&x^1(u^1,u^2,u^3,u^4)=u^1-\frac{u^2}{2}\notag\\
			&x^2(u^1,u^2,u^3,u^4)=\frac{2(u^2)^2+u^2u^4-(u^3)^2}{(u^2)^2}\,e^\frac{u^3}{u^2}\notag\\
			&x^3(u^1,u^2,u^3,u^4)=\frac{u^2u^4-(u^3)^2}{(u^2)^3}\,e^\frac{u^3}{u^2}-\frac{C_2}{u^2}\notag\\
			&x^4(u^1,u^2,u^3,u^4)=\ln{u^2}\notag
		\end{align}
		we have
		\[\tilde{e}=\tilde{\partial}_1,\,\,\tilde{E}=x^1\,\tilde{\partial}_1-x^3\,\tilde{\partial}_3+\tilde{\partial}_4,\,\,
		\tilde{\eta}=\begin{bmatrix}
			0&0&1&0\\0&0&0&\frac{1}{2}\\1&0&0&0\\0&\frac{1}{2}&0&C_2
		\end{bmatrix}.
		\]
		Up to second-order polynomial terms the prepotential is
		\[F(x^1,x^2,x^3,x^4)=-\frac{1}{16}x^3\,e^{2x^4}+\bigg(C_2+\frac{x^2}{2}\bigg)\,e^{x^4}+\frac{C_2}{2}x^1(x^4)^2+\frac{1}{2}x^1x^2x^4+\frac{1}{2}(x^1)^2x^3.\]
		In the case where $d=1$, which must be handled separately as well, flat coordinates are given by
		\begin{align}
			&x^1(u^1,u^2,u^3,u^4)=u^1+\frac{u^2}{2}-\frac{u^2}{2}\ln{u^2}\notag\\
			&x^2(u^1,u^2,u^3,u^4)=\frac{u^2u^4-(u^3)^2}{(u^2)^2}\,e^\frac{u^3}{u^2}+C_2\,\ln{u^2}\notag\\
			&x^3(u^1,u^2,u^3,u^4)=\bigg(\frac{u^2u^4-(u^3)^2}{(u^2)^2}\ln{u^2}+2\bigg)\,e^\frac{u^3}{u^2}+\frac{C_2}{2}\big(\ln{u^2}\big)^2\notag\\
			&x^4(u^1,u^2,u^3,u^4)=u^2\notag
		\end{align}
		and 
		\[\tilde{e}=\tilde{\partial}_1,\,\,\tilde{E}=\bigg(x^1-\frac{x^4}{2}\bigg)\,\tilde{\partial}_1+C_2\,\tilde{\partial}_2+x^2\,\tilde{\partial}_3+x^4\,\tilde{\partial}_4,\,\,
		\tilde{\eta}=\begin{bmatrix}
			0&1&0&0\\1&0&0&0\\0&0&0&\frac{1}{2}\\0&0&\frac{1}{2}&0
		\end{bmatrix}.
		\]
		Up to second-order polynomial terms the prepotential is
		\begin{align}
			\notag
			F(x^1,x^2,x^3,x^4)&=\frac{C_2}{24}(x^4)^2\big(\ln{x^4}\big)^3-\frac{3}{16}\bigg(C_2+\frac{2}{3}x^2\bigg)(x^4)^2\big(\ln{x^4}\big)^2\\&+\frac{7}{16}\bigg(C_2+\frac{6}{7}x^2+\frac{4}{7}x^3\bigg)(x^4)^2\ln{x^4}+\frac{1}{2}x^1x^3x^4\notag\\&+\frac{1}{2}(x^1)^2x^2-\frac{7\,x^2+6\,x^3}{16}(x^4)^2.\notag
		\end{align}
	\end{es}

\section{Conclusions}
In this paper we have studied regular Dubrovin-Frobenius manifold structures $(\eta,\circ,e,E)$ assuming that the Jordan form of 
 the operator of multiplication by the Euler vector field $L$ contains a single Jordan block.
We have used a special set of coordinates introduced by David and Hertling where the operator $L$ and the Dubrovin-Frobenius metric take the form
\[L=\begin{bmatrix}u^1&0&0&\dots&0&0\\u^2&u^1&0&\dots&0&0\\u^3&u^2&u^1&\dots&0&0\\\vdots&\vdots&\vdots&\ddots&\vdots&\vdots\\u^{n-1}&u^{n-2}&u^{n-3}&\dots&u^1&0\\u^n&u^{n-1}&u^{n-2}&\dots&u^2&u^1
		\end{bmatrix},\eta=\begin{bmatrix}
			\d_1 H&\d_2 H&\d_3 H&\dots&\d_{n-1}H&\d_nH\\\d_2H&\d_3H&\d_4H&\dots&\d_nH&0\\\d_3H&\d_4H&\d_5H&\dots&0&0\\\vdots&\vdots&\vdots&\ddots&\vdots&\vdots\\\d_{n-1}H&\d_nH&0&\dots&0&0\\\d_nH&0&0&\dots&0&0
		\end{bmatrix}.\]
We have shown that, up to constants, the metric potential $H$ has the form 
\begin{equation}
				H=(u^2)^{1-d}\,f+C_2\,\varphi(u^2)+C_1\,u^1
			\end{equation}
		where
			\begin{equation}
				\varphi(u^2)=\begin{cases}
					\frac{(u^2)^{1-d}}{1-d}\qquad&\text{if }d\neq1\\
					\ln{u^2}\qquad&\text{if }d=1
				\end{cases}
		\end{equation}
and $f=f(z^1,...,z^{n-2})$ with $z^i=\frac{u^{i+2}}{u^2}$ for each $i=1,\dots,n-2$.
In dimension $n=2,3,4$ we have obtained explicit formulas for the metric potential $H$ (see Table 1 below) and, in some special cases, we have computed the flat coordinates and the corresponding prepotential. 
\begin{table}[h]
\begin{center}
\begin{tabular}{|p{15mm}|p{125mm}|}
\hline
$n=2$ & $f=C_3$   \\ 
\hline
$n=3$ & $f=C_3\,z+C_4$  or $f=\frac{C_4}{z+C_3}+C_5$. \\
\hline
$n=4$ & $f=C_3we^{C_4\,z}+C_7-\frac{e^{C_4\,z}}{C_4^2}\bigg[C_3C_4^2z^2-C_4(2C_3+C_5)z-C_4C_6+2C_3+C_5\bigg]$,
\newline
or
\newline
$f=C_3we^{C_4\,z}+C_5\,z^2+C_6\,z+C_7$,
\newline
or
\newline
$f=C_3-\frac{A(z)}{2\,B(z)+w}$
\newline
with $z=\frac{u^{3}}{u^2}$, $w=\frac{u^{4}}{u^2}$, $A(z)$ and $B(z)$ defined as in Proposition \ref{AB}.
\\
\hline
\end{tabular}
\end{center}
\caption{\footnotesize Metric potential}
\label{tab_results}
\end{table} 
\newline
According to the general theory the metrics $\eta^{-1}$ and $L\eta^{-1}$ define a flat pencil of metrics.
 Therefore a byproduct of our results is a list of non semisisimple flat
 pencils of metrics that define the bi-Hamiltonian structures of the principal hierarchies of the associated Dubrovin-Frobenius manifolds. The study of this class of bi-Hamiltonian structures and of their bi-Hamiltonian deformations in the non semisimple case is at a preliminary stage and only a few results are available so far (see for instance \cite{DLS}).
 The explicit examples obtained in this work might be the starting point of future investigations.
\newline
\newline
\noindent{\bf Acknowledgements}.  P.~L. is supported by funds of H2020-MSCA-RISE-2017 Project No. 778010 IPaDEGAN.
 Authors are also thankful to GNFM - INdAM for supporting activities that contributed to the research reported in this paper.

\end{document}